\newcounter{defcounter}
\theoremstyle{plain}
\newtheorem{theorem}{Theorem}
\newtheorem{proposition}[theorem]{Proposition}
\newtheorem{corollary}[theorem]{Corollary}
\newtheorem{lemma}[theorem]{Lemma}
\newtheorem{proposition.definition}[theorem]{Proposition/Definition}
\newtheorem{theoremalpha}{Theorem}
\newtheorem{corollaryalpha}[theoremalpha]{Corollary}
\newtheorem{conjecture}[theorem]{Conjecture}
\theoremstyle{definition}
\newtheorem{definition}[theorem]{Definition}
\newtheorem{setup}[theorem]{Set-Up}
\newtheorem{remark}[theorem]{Remark}
\newtheorem{example}[theorem]{Example}
\newtheorem{problem}[theorem]{Problem}
\newcommand{\lra}{\longrightarrow}
\newcommand{\noi}{\noindent}
\newcommand{\PP}{\mathbf{P}}
\newcommand{\CC}{\mathbf{C}}
\newcommand{\QQ}{\mathbf{Q}}
\newcommand{\OO}{\mathcal{O}}
\newcommand{\II}{\mathcal{I}}
\newcommand{\HH}[3]{H^{{#1}} \big( {#2} , {#3}
\big) }
\newcommand{\ord}{\textnormal{ord}}
\newcommand{\codim}{\textnormal{codim}}
\newcommand{\pr}{\prime}
\newcommand{\lin}{\equiv_{\text{lin}}}
\newcommand{\dra}{\dashrightarrow}
\newcommand{\Bl}{\text{Bl}}
\newcommand{\Linser}[1]{| \mspace{1.5mu} {#1}
\mspace{1.5mu} |}
\newcommand{\linser}[1]{\Linser{  {#1}  }}
\newcommand{\gon}{\textnormal{gon}}
\newcommand{\pro}{{pr}}
\newcommand{\Sym}{\textnormal{Sym}}
\newcommand{\GG}{\mathbf{G}}
\newcommand{\chow}{\textnormal{CH}}
\newcommand{\trace}{\textnormal{Tr}}
\newcommand{\irrdeg}{\textnormal{irr}}
\newcommand{\conngon}{\textnormal{conn.\,gon}}
\newcommand{\covgon}{\textnormal{cov.\,gon}}
\newcommand{\BVA}[1]{\textnormal{(BVA)}_{#1}}
\newcommand{\Sec}{\textnormal{Sec}}
\numberwithin{theorem}{section}
\begin{document}

\title[Measures of irrationality]
{Measures of Irrationality for Hypersurfaces of Large Degree
 }

\author[Bastianelli]{Francesco Bastianelli}
\address{Dipartimento di Matematica, Universit\`a degli Studi di Bari, Via Edoardo Orabona 4, 
70125 Bari}
\email{\tt 
francesco.bastianelli@uniba.it}
\thanks{Research of the first author partially supported by FIRB 2012 ``Spazi di moduli e applicazioni''.}

\author[De Poi]{Pietro De Poi}
\address{Dipartimento di Scienze Matematiche, Informatiche e Fisiche, Universit\`a degli Studi di Udine, Via delle Scienze 206, 33100 Udine}
\email{\tt pietro.depoi@uniud.it}
\thanks{Research of the second author partially supported by MIUR PRIN 2010--2011 ``Geometria delle variet\`a algebriche''.}

 \author[Ein]{Lawrence Ein}
  \address{Department of Mathematics, University Illinois at Chicago, 851 South Morgan St., Chicago, IL  60607}
 \email{{\tt ein@uic.edu}}
 \thanks{Research of the third author partially supported by NSF grant DMS-1501085.}

 \author[Lazarsfeld]{Robert Lazarsfeld}
  \address{Department of Mathematics, Stony Brook University, Stony Brook, New York 11794}
 \email{{\tt robert.lazarsfeld@stonybrook.edu}}
 \thanks{Research of the fourth author partially supported by NSF grant DMS-1439285.}

 \author[Ullery]{Brooke Ullery  
  } \address{Department of Mathematics, Harvard University, Cambridge, MA 02138}
 \email{{\tt ullery@math.harvard.edu}}
 \thanks{Research of fifth author partially supported by an NSF Postdoctoral Fellowship,    DMS-1502687}

 \dedicatory{Dedicated to J\'anos Koll\'ar on the occasion of his sixtieth birthday}

\maketitle
\setlength{\parskip}{.13in minus .03in}

     \section*{Introduction}
 
 There has been a great deal of recent interest and progress in studying  issues of  rationality  for algebraic varieties (cf. \cite{Kollar}, \cite{Voisin2}, \cite{VoisinChow2}, \cite{Totaro}, \cite{HPT}). The purpose of this paper is to investigate a complementary circle of questions:  in what manner can one quantify and control ``how irrational" a given projective variety $X$ might be? We consider various measures of irrationality for hypersurfaces of large degree in projective space and other varieties. The theme is that  positivity properties of  canonical bundles lead to lower bounds for these invariants. In particular, we  prove the  main conjecture of  \cite{BCD}  that if $X \subseteq \PP^{n+1}$ is a very general smooth hypersurface of dimension $n$ and degree $d \ge 2n+1$ then any dominant rational mapping $f : X \dra \PP^n$ must satisfy
\[  \deg(f) \, \ge \, d-1. \]
We also propose a number of open problems involving this circle of ideas, and we show how our methods lead to simple new proofs of theorems of Ran \cite{Ran} and Behesti--Eisenbud \cite{BE} concerning varieties of multi-secant lines.

To start with some background,  recall that the \textit{gonality} $\gon(C)$ of  an irreducible complex projective curve $C$ is defined to be the least degree of a branched covering \[ C^\pr \lra \PP^1, \]where $C^\pr$ is the normalization of $C$.  Thus 
\[ \gon(C) \ = \ 1 \ \ \Longleftrightarrow \  \ C \, \approx_{\text{birat}} \, \PP^1, \] and it is profitable in general to view $\gon(C)$  as measuring the failure of $C$ to be rational. Because of this there has  been a certain amount of interest over the years in  bounding from below the gonality of various natural classes of curves. For instance, a classical theorem of Noether states that if $C \subseteq  \PP^2$ is a smooth plane curve of degree $d \ge 3$, then \[ \gon(C) \ = \  d-1, \] with the relevant coverings given by projection from a point of $C$. This was generalized to complete intersection and other curves in \cite[Ex. 4.12]{LLS} and \cite{HS} by means of  vector bundle techniques.  Abramovich \cite{Abramovich} used results of Li and Yau to obtain a linear lower bound on the gonality of modular curves. In a somewhat different direction, it was established in \cite{Lengths} that the Buser-Sarnak invariant of $\text{Jac}(C)$ is linearly bounded above  by $\gon(C)$, and  the behavior of gonality in certain towers of coverings was studied by Hwang and To \cite{HwangTo} as a consequence of relations they established between gonality and injectivity radii. The paper \cite{BT} contains some interesting applications of the results of Hwang--To.

Several authors have  proposed and studied some analogous measures of irrationality for an irreducible complex projective variety $X$ of arbitrary dimension $n$. We will be principally  concerned here with  three of these  -- the \textit{degree of irrationality}, the \textit{connecting gonality}, and the \textit{covering gonality} of $X$ -- defined as follows:
\begin{align*} \irrdeg(X) \ &= \ \min \Big  \{ \delta > 0 \  
\Big | \parbox{2.2in}{\begin{center} $\exists$ degree $\delta$  rational covering \\$X \dra \PP^n$  \end{center}}
\Big  \};  \\ \\
\conngon(X) \ &= \ \min \Bigg  \{ c > 0 \  
\Bigg | \parbox{2.7in}{\begin{center} General points $x, y \in X$ can be connected by an irreducible curve $C \subseteq X$ with  $\gon(C) = c$. \end{center}}
\Bigg \}; 
\\ \\
\covgon(X) \ &= \ \min \Bigg  \{ c > 0 \ 
\Bigg | \parbox{2.7in}{\begin{center} Given a general point $x \in X$, $\exists$ an irreducible curve $C \subseteq X$ through $x$ with  $\gon(C) = c$. \end{center}}
\Bigg  \}. \end{align*}
(Note that the curves $C$ computing the connecting and covering gonalities  are allowed to be singular.) Thus
\begin{align*}
\irrdeg(X) \, = \, 1 \ &\Longleftrightarrow \ X \text{ is rational},\\
\conngon(X) \, = \, 1 \ &\Longleftrightarrow \ X \text{ is rationally connected},\\
\covgon(X) \, = \, 1 \ &\Longleftrightarrow \ X \text{ is uniruled},\end{align*}
and in general  one has the inequalities
\begin{equation} \label{relations.among.invariants}  \covgon(X) \, \le \, \conngon(X) \, \le \, \irrdeg(X). \end{equation}
The  integer $\irrdeg(X)$ is   perhaps the most natural generalization  of the gonality of a curve, but $\covgon(X)$ often seems to be easier to control.\footnote{We introduce the connecting gonality only because it fits naturally into the  picture. In fact this invariant does not enter seriously into any of our results.} Another invariant, suggested by Voisin, is the least degree $v(X)$ of one-parameter families of Chow-constant zero cycles that cover $X$. This satisfies $v(X) \le \covgon(X)$. 

The degree of irrationality was introduced by Heinzer and Moh in \cite{HM}, and Yoshihara subsequently computed it for several classes of surfaces   (\cite{Y1}, \cite{Y2}, \cite{Y2.5}, \cite{Y3}, \cite{Y4}). Lopez and Pirola \cite{LP} showed in passing that if $X \subseteq \PP^3$ is a surface of degree $d \ge 4$, then $\covgon(X) = d-2$.  Along similar lines,  Fakhruddin  established in his note \cite{F} that given any integer $c > 0$, a very general hypersurface of sufficiently large degree in any smooth variety does not contain any curves of gonality $\le c$. However as a measure of irrationality, it seems that the covering gonality was first studied systematically in \cite{Bast}, where the first author computes $\covgon(X)$ and bounds $\irrdeg(X)$ when $X = C_2$  is the symmetric square of a curve $C$. 

The present work was most directly motivated by the   paper \cite{BCD} in which Cortini and the first two authors consider the question of computing the degree of irrationality of a smooth projective hypersurface
\[    X \, = \, X_d \ \subset \ \PP^{n+1}  \]
of degree $d$ and dimension $n \ge 2$, generalizing the result of Noether for plane curves cited above. They show to begin with that if $d \ge n+3$ then 
\begin{equation} \label{BCP.Bound}
d -n   \ \le \ \irrdeg(X) \ \le d -1. 
\end{equation}
It can happen that $\irrdeg(X) < d-1$, but it was established in \cite{BCD} that if $X$ is a \textit{very general} surface of degree $d \ge 5$ or threefold of degree $d \ge 7$ then
\[  \irrdeg(X) \ = \ d-1, \]
and  the exceptional cases were classified in these dimensions. It was conjectured there that this statement extends to hypersurfaces of all dimensions.

Our first results concern covering gonality.
\begin{theoremalpha} \label{Cov.Gon.Thm}
Let $X \subseteq \PP^{n+1}$ be a smooth hypersurface of dimension $n$ and degree $d \ge n+2$. Then
\[  \covgon(X) \ \ge \ d - n . \]
\end{theoremalpha}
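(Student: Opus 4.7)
The strategy is to argue by contradiction, using the positivity of $\omega_X \cong \OO_X(d-n-2)$ -- inherited from the surjection $H^0(\PP^{n+1}, \OO(d-n-2)) \twoheadrightarrow H^0(X, \omega_X)$, available since $d \ge n+2$ -- to derive a contradiction from a hypothetical low-gonality covering via a Cayley--Bacharach statement on the fibers of the gonality pencil. Suppose $\covgon(X) \le c := d - n - 1$. A standard Hilbert-scheme construction produces an irreducible smooth variety $T$, a smooth family $\pi: \mathcal{C} \to T$ of projective curves, a dominant generically finite morphism $f: \mathcal{C} \to X$, and a $T$-morphism $\phi: \mathcal{C} \to \PP^1 \times T$ of fiberwise degree $c$. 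For a general point $x \in X$, pick $w \in f^{-1}(x)$ lying on a curve $\tilde C_{t_0} := \pi^{-1}(t_0)$, write $\phi^{-1}(\phi(w)) = \{w, w_2, \ldots, w_c\}$, and set $R_x := \{f(w_2), \ldots, f(w_c)\} \subset X$, a $0$-cycle of length $c - 1$.

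The heart of the argument is the following Cayley--Bacharach claim: any section $\eta \in H^0(X, \omega_X)$ vanishing on $R_x$ must also vanish at $x$. On the curve, the fiber $Z = \{w, w_2, \ldots, w_c\}$ moves in the base-point-free pencil $|\phi^* \OO_{\PP^1}(1)|$, so $h^0(\OO_{\tilde C_{t_0}}(Z)) \ge 2$; Riemann--Roch and Serre duality on $\tilde C_{t_0}$ then show that $Z$ imposes only $c - 1$ conditions on $|\omega_{\tilde C_{t_0}}|$, which is the classical Cayley--Bacharach for a fiber of a $g^1_c$. To transfer this to $\omega_X$, I would use the adjunction $\omega_{\tilde C_{t_0}} \cong \omega_X|_{\tilde C_{t_0}} \otimes \det N_{\tilde C_{t_0}/X}$: multiplying $\eta|_{\tilde C_{t_0}}$ by a section $\nu \in H^0(\det N_{\tilde C_{t_0}/X})$ nonvanishing at $w$ (which should exist because the deformations of $\tilde C_{t_0}$ in the covering family generate $N_{\tilde C_{t_0}/X}$ at general points), the product $\eta|_{\tilde C_{t_0}} \otimes \nu$ is a section of $\omega_{\tilde C_{t_0}}$ vanishing at $w_2, \ldots, w_c$ and hence, by the curve-level Cayley--Bacharach, at $w$ as well; since $\nu(w) \ne 0$, this forces $\eta(x) = 0$.

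Given the claim, the contradiction is immediate: since $c - 1 \le d - n - 2$, a standard general-position argument on $\PP^{n+1}$ yields $\eta \in H^0(X, \omega_X)$ vanishing on $R_x$ but not at $x$, contradicting the claim and giving $\covgon(X) \ge d - n$. The boundary case $d = n + 2$, where $\omega_X \cong \OO_X$, must be handled separately: here the conclusion $\covgon(X) \ge 2$ amounts to $X$ not being uniruled, which follows from Miyaoka--Mori since $\omega_X$ is trivial and hence pseudoeffective. I expect the main obstacle to be the adjunction-transfer step -- specifically, ensuring uniformly the existence of the section $\nu$ with $\nu(w) \ne 0$ across the covering family (which is a statement about positivity of $\det N_{\tilde C_{t_0}/X}$ in the relative setting), and checking that the multiplication $\eta|_{\tilde C} \otimes \nu$ respects the adjunction intrinsically so that the curve-level Cayley--Bacharach genuinely descends to a constraint on sections of $\omega_X$.
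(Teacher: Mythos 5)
Your overall strategy -- transfer the $(d-n-2)$-very ampleness of $\omega_X=\OO_X(d-n-2)$ to the general member of the covering family and play it against the Cayley--Bacharach property of a fibre of the $g^1_c$ -- is sound, and the curve-level Cayley--Bacharach computation and the final general-position step are both correct. (The boundary case $d=n+2$ in fact needs no separate treatment: there $c=1$, $R_x=\emptyset$, and your claim would force every section of $\omega_X\cong\OO_X$ to vanish at a general point, already a contradiction.) But the transfer step, which you yourself flag as the main obstacle, has a genuine gap as written: the curves of a covering family are \emph{not} embedded in $X$. The definition only requires $f_t:\tilde C_{t_0}\to X$ to be birational onto its image, and the images are typically singular -- e.g.\ the family computing $\covgon$ of a surface in $\PP^3$ consists of nodal tangent-plane sections. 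So the normal bundle $N_{\tilde C_{t_0}/X}$ does not exist, the adjunction formula $\omega_{\tilde C_{t_0}}\cong\omega_X|_{\tilde C_{t_0}}\otimes\det N_{\tilde C_{t_0}/X}$ has no meaning, and the appeal to deformations generating the normal bundle cannot even be formulated. Restricting to families of smooth embedded curves would not suffice to bound $\covgon$.

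The repair is exactly the paper's one-line observation, and it makes your section $\nu$ come for free. After shrinking $T$ so that $f:\mathcal{C}\to X$ is generically finite between smooth varieties of the same dimension, one has $K_{\mathcal{C}}\lin f^*K_X+\mathrm{Ram}(f)$ with $\mathrm{Ram}(f)$ effective; restricting to a general fibre gives $\omega_{\tilde C_{t_0}}\lin f_{t_0}^*\omega_X+\mathrm{Ram}(f)|_{\tilde C_{t_0}}$, which is the correct substitute for $\det N$. The role of $\nu$ is played by the canonical section of $\OO(\mathrm{Ram}(f))$, and it is automatically nonzero at every $w\in f^{-1}(x)$ for general $x$, since $f(\mathrm{Ram}(f))$ is a proper closed subset of $X$ -- no positivity statement about normal bundles is needed. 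With this replacement your argument goes through and is essentially the Lopez--Pirola/Bastianelli null-trace route, which the paper acknowledges as an alternative in Remark \ref{Direct.Proof.Irr.Bound}. The paper itself packages the same computation differently: it shows $K_{\tilde C_{t_0}}$ inherits property $\BVA{d-n-2}$ from $K_X$ via the ramification formula, and then quotes the elementary Lemma \ref{Gon.Bound.Curve.Lemma} (a Riemann--Roch count equivalent to your Cayley--Bacharach step) to conclude $\gon(\tilde C_{t_0})\ge d-n$; this avoids having to fit the $g^1_c$'s into a family over $T$ and isolates the positivity notion that drives the whole argument.
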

\noi More generally, we show that $v(X) \ge d-n$. Observe that one recovers in particular the lower bound \eqref{BCP.Bound} of Bastianelli, Cortini and De Poi on the degree of irrationality of such hypersurfaces. In fact  it suffices in the Theorem that $X$ is normal with at worst canonical singularities, and in this setting  the statement is best possible for every $n \ge 2$ and $d \ge n+2$.   We actually prove that the conclusion of the Theorem and the bound for $v(X)$ hold for any smooth projective variety $X$ with
\[   K_X \ \lin \ B + E \]
where $B$ is a $(d-n-2)$-very ample divisor on $X$  and $E$ is effective.\footnote{Recall that a divisor $B$ on a smooth projective variety $Y$ is said to be $p$-very ample if any finite subscheme $\xi \subseteq Y$ of length $(p+1)$ imposes independent conditions on $H^0(Y,B)$. If $A$ is a very ample divisor then $\OO_Y(pA)$ is $p$-very ample, and therefore if  $X \subseteq \PP^{n+1}$ is a smooth hypersurface of degree $d$ then $K_X$ is $(d-n-2)$-very ample.} Thus  we deduce
\begin{corollaryalpha}
Let $M$ be a smooth projective variety, and let $A$ be a very ample divisor on $M$. There is an integer $e = e(M, A)$ depending only on $M$ and $A$ with the property that if
\[   X_d \, \in \ \linser{dA} \]
is any smooth divisor, then 
\[  \covgon(X_d) \ \ge \ d - e. \]
\end{corollaryalpha}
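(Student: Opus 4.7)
The plan is to deduce the corollary from the variant of Theorem~A described in the paragraph following its statement: if $X$ is a smooth projective variety with $K_X \lin B + E$ where $B$ is $p$-very ample and $E$ is effective, then $\covgon(X) \ge p + 2$.  It thus suffices to produce a decomposition of $K_{X_d}$ of exactly this shape in which the $p$-very-ampleness of $B$ grows as $d - c$ for a constant $c = c(M,A)$ independent of $d$.

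The decomposition will come from adjunction.  Since $A$ is ample on $M$, I would first fix once and for all an integer $e_0 = e_0(M,A)$ large enough that $\linser{K_M + e_0 A}$ is nonempty and of positive dimension (concretely, by invoking Serre vanishing to arrange that $K_M + e_0 A$ is base-point free).  For any smooth $X_d \in \linser{dA}$ with $d \ge e_0$, adjunction gives
\[
K_{X_d} \ \lin \ (K_M + dA)\big|_{X_d} \ \lin \ (K_M + e_0 A)\big|_{X_d} \, + \, (d-e_0)\,A\big|_{X_d}.
\]
Since $X_d$ is a proper closed subvariety of $M$ and $\linser{K_M + e_0 A}$ has positive dimension, I can choose a divisor in this linear system that does not contain $X_d$; its restriction to $X_d$ is an effective divisor $E$.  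The second summand $B := (d - e_0)\,A|_{X_d}$ is $(d - e_0)$-very ample on $X_d$, because $A|_{X_d}$ is very ample (as $A$ is very ample on $M$) and, as the footnote records, the $k$-th multiple of a very ample divisor is $k$-very ample.

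Applying the generalized Theorem~A with $p = d - e_0$ then yields
\[
\covgon(X_d) \ \ge \ (d - e_0) + 2 \ = \ d - (e_0 - 2),
\]
so the corollary holds with $e := e_0 - 2$.  When $d < e_0$ the asserted bound $\covgon(X_d) \ge d - e$ is at most $1$ and hence automatic.  I do not anticipate a substantive obstacle: once the divisorial form of Theorem~A is in hand, the argument is essentially bookkeeping.  The only point requiring any care is ensuring that some divisor in $\linser{K_M + e_0 A}$ avoids $X_d$, which is why I insist on positive-dimensional sections rather than merely nonvanishing $h^0$; this is a property of the pair $(M, A)$ alone, so the constant $e_0$ can indeed be chosen independently of the particular smooth $X_d$.
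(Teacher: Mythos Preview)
Your proof is correct and follows essentially the same route as the paper: fix $e$ so that $\linser{K_M + (e+2)A}$ is base-point free, split $K_{X_d} \lin (d-e-2)A|_{X_d} + E$ by adjunction, and apply the positivity criterion for covering gonality. Your $e_0$ is exactly the paper's $e+2$, and your final constant $e_0 - 2$ matches the paper's $e$; the only extraneous remark is the worry about ``positive-dimensional sections,'' since base-point freeness alone already guarantees a member of $\linser{K_M + e_0 A}$ not containing $X_d$ (choose one missing a point of $X_d$).
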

\noi In particular, the degree of irrationality of $X_d$  goes to infinity with $d$. (One can prove this last fact directly using the ideas of \cite{LP}, \cite{Bast},  \cite{BCD}, and \cite{GP}: see Remark \ref{Direct.Proof.Irr.Bound}.)  As noted above, Fakhruddin proved in \cite{F} the closely related result that in the situation of the Corollary, there is a linear function $d(c)$ such that a very general divisor $X_d \in \linser{dA}$ actually contains no curves of gonality $\le c$ provided that $d \ge d(c)$. (Compare Proposition \ref{No.Curves.Small.Gonality} below.)

Returning to smooth hypersurfaces in projective space, our second theorem proves the main conjecture of \cite{BCD}: \begin{theoremalpha} \label{BCP.Conj.Thm.}
Let $X \subseteq \PP^{n+1}$ be a very general smooth hypersurface of dimension $n$ and degree $d \ge 2n+1$. Then
\[  \irrdeg(X) \, = \, d-1. \]
Furthermore, if $d \ge 2n+2$ then any rational mapping 
\[  f : X \dra \PP^n \ \ \text{ with } \deg(f) = d-1 \]
is given by projection from a point of $X$.
\end{theoremalpha}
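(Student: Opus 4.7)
The plan is to bootstrap Theorem A to pin $f$ down completely. Suppose $f : X \dra \PP^n$ is dominant of degree $\delta \le d-1$; for a general line $\ell \subset \PP^n$ the preimage $C_\ell := f^{-1}(\ell)$ is an irreducible curve covering $X$ and carrying a $g^1_\delta$ given by $f|_{C_\ell}$. Theorem A applied to this covering family yields at once $\delta \ge d-n$, so what remains is to exclude the range $d-n \le \delta \le d-2$ and, in the equality case $\delta = d-1$, to identify $f$ with projection from a point of $X$.

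The central geometric claim to be established is that, under the hypothesis $d \ge 3n$, a general fiber $\xi = f^{-1}(z)$ is contained in a line $L_z \subset \PP^{n+1}$. Fix a general $z \in \PP^n$ and look at the $(n-1)$-dimensional family of lines $\ell \ni z$: every $C_\ell$ passes through all of $\xi$, and $\xi$ appears as a single fiber of the pencil $f|_{C_\ell}$. Running the Cayley--Bacharach / residuation argument underlying Theorem A for this entire parametrized family of pencils --- rather than for a single covering family as there --- should force the points of $\xi$ together with a general additional point $p \in X$ to impose dependent conditions on $H^0(X, K_X)$ in a highly constrained fashion; the $(d-n-2)$-very-ampleness of $K_X$ will then imply that such a dependence is possible only when $\langle \xi \rangle \subset \PP^{n+1}$ is a line. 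The strict inequality $d \ge 3n$ is what keeps the numerology afloat in this step.

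Granted the collinearity claim, the rational map $z \mapsto L_z \in \GG(1, n+1)$ produces an $n$-parameter family of lines each meeting $X$ in $\ge \delta$ points. These lines cannot themselves lie in $X$, since for very general $X$ of degree $d \ge 3n$ the Fano scheme of lines on $X$ has dimension far below $n$; hence each $L_z$ meets $X$ in a residual scheme $R_z$ of length $d - \delta \ge 1$. When $\delta = d-1$, $R_z$ is a single point $q(z) \in X$, and a rigidity argument (the assignment $z \mapsto q(z)$ is dominated by $X$, coupled with Mumford-type constraints on zero-cycles for varieties with $p_g > 0$) should show that $q(z) \equiv q$ is constant, whence $f$ is projection from $q \in X$. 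When $d-n \le \delta \le d-2$, the positive-dimensional family $\{R_z\}$ produces secant configurations on $X$ incompatible with the $K_X$-positivity invoked in the linearization step, yielding a contradiction.

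The main obstacle is the linearization in the second paragraph --- forcing the fibers of $f$ to land on \emph{lines} rather than into generic low-dimensional linear subspaces. The essential new input beyond Theorem A is the availability of an $(n-1)$-parameter family of covering curves sharing a single fiber $\xi$, not merely the one-parameter pencil that suffices for the covering gonality bound. The gap between the conjectured threshold $d \ge 2n+1$ and the present hypothesis $d \ge 3n$ reflects the unavoidable slack in passing from the covering gonality argument to this more delicate linearization.
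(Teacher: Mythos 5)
Your reduction to the collinearity of general fibers is the right first step --- this is precisely \cite[Theorem 2.5]{BCD}, already valid for $d \ge 2n+1$ via the trace/Cayley--Bacharach argument you allude to, so it can simply be quoted rather than reproved --- but you have misplaced where the hypothesis $d \ge 3n$ enters, and the two concluding steps of your argument are heuristics in which the actual mechanism is missing. Neither the ``rigidity of $z \mapsto q(z)$ via Mumford-type constraints on zero-cycles'' (for $\delta = d-1$) nor the ``secant configurations incompatible with $K_X$-positivity'' (for $d-n \le \delta \le d-2$) is a proof as stated. The paper instead exploits the second key fact from \cite{BCD}: the lines $L_z$ form a congruence of \emph{order one}, so the tautological $\PP^1$-bundle $W \to B$ over the resolved parameter space $B$ maps birationally onto $\PP^{n+1}$ and, crucially, $B$ is \emph{rational}. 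Writing $\mu^*X = X' + F$ in $W$, the residual divisor $F$ has relative degree $d-\delta \ge 1$ over $B$; for any component $Y$ of $F$ dominating $B$, either $\mu(Y)$ is a single point --- in which case every line of the congruence passes through it and $f$ is projection from that point --- or $\mu(Y)$ is positive-dimensional, and restricting $Y \to B$ over a general rational curve $\Gamma \subseteq B$ produces an irreducible curve $C \subseteq X$ with $\gon(C) \le d-\delta \le n$. This one dichotomy disposes of both of your cases simultaneously.

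The second, decisive, ingredient your proposal never invokes is what rules out such a curve: the Ein--Voisin positivity theorem for subvarieties of very general hypersurfaces, which shows that every irreducible curve $C$ on a very general $X$ of degree $d$ has a normalization whose canonical bundle is $(d-2n-1)H$ plus an effective divisor, whence $\gon(C) \ge d-2n+1$. This is where ``very general'' and $d \ge 3n$ actually do their work: $d \ge 3n$ forces $\gon(C) \ge n+1$, contradicting the curve of gonality $\le n$ produced by the congruence. Without this (or some substitute), your argument cannot close: Theorem A only controls curves through a \emph{general} point of $X$, whereas the low-gonality curve coming from the residual locus of the congruence may lie in a proper subvariety, so no amount of bootstrapping the covering-gonality bound will reach it.
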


The proof of Theorem \ref{Cov.Gon.Thm}, which is quite quick and elementary, occupies \S1: there we work on an arbitrary smooth variety whose canonical bundle satisfies a suitable positivity property.  Voisin's invariant $v(X)$ is studied in \S 2 using the ideas introduced by Mumford in \cite{Mumford}. These results actually imply Theorem \ref{Cov.Gon.via.BVA.Thm}, but we felt that it was worthwhile nonetheless to present the elementary and transparent proof of that statement.  

For Theorem \ref{BCP.Conj.Thm.}, which appears in \S 3, we start with  the set-up established in  \cite{BCD}. It is shown there
\[  f : X \dra \PP^n \]
is a rational mapping with \[ d-n \   \le \   \deg(f) \ \le \  d-2, \] then each of  the fibres of $f$ spans a line in $\PP^{n+1}$ provided that $d \ge 2n+1$. Furthermore  these lines form a congruence  of order one on $\PP^{n+1}$, meaning that a general point of $\PP^{n+1}$ lies on exactly one of the lines.  The main effort in \cite{BCD} was to use results on the classification of such congruences to show that $X$ must contain a rational curve when $n = 2$ or  $n = 3$, which forces $X$ to be special provided that $d \ge 2n+1$. The  new point here is the observation   that in arbitrary dimension $n$, whether or not $X$  contains a rational curve,    one can  locate on $X$ a relatively large subvariety covered by  curves of gonality $e \le n$. On the other hand, drawing on   computations of the third author and Voisin  in \cite{Ein} and \cite{Voisin},  one can bound the dimension of a subvariety of small covering gonality in a very general hypersurface. Theorem \ref{BCP.Conj.Thm.} follows. The common thread in these arguments is that the  invariants   we consider are ultimately controlled by measuring the positivity of canonical bundles. 

In \S4 we  present a number of conjectures and open problems. It turns out that the computations in \S 3 also lead to  quick new proofs of results of Ran \cite{Ran} and Beheshti--Eisenbud \cite{BE} concerning varieties with many highly secant lines. These appear in the Appendix.

The reader will see that the methods of the present paper are rather elementary, and several of the ideas involved are at least  implicit in earlier work such as \cite{LP}, \cite{F}, \cite{Kn}, \cite{Bast}, \cite{BCD} and \cite{GP}. However we have tried to pull things together  in a natural way by focusing on a specific birational measure of positivity for the canonical bundle (Definition \ref{BVA}). We hope that this might help to lay the foundation for further work on    what we consider to be an interesting circle of questions. 

In an earlier version of this paper, the  last three authors proved the conjecture of \cite{BCD} under the stronger numerical hypothesis $d \ge 3n$, which the first two authors weakened somewhat in an appendix. Voisin subsequently showed us how to get this down to $d > \frac{5}{2}n$, after which the first two authors were able to prove Theorem \ref{BCP.Conj.Thm.} as stated above.\footnote{We note that David Yang independently gave  the improvement to $d > \tfrac{5}{2}n$ of our original bound $d \ge 3n$.} The present paper represents a pooling of these efforts.

We are grateful to Ron Donagi, Daniel Litt, Luigi Lombardi, John Ottem, Ian Shipman, David Stapleton,  Jason Starr and Damiano Testa for helpful discussions.  We would also like to acknowledge the influence of Pietro Pirola, who with his colleagues introduced many of the ideas that implicitly play a role here. We are particularly grateful to Claire Voisin, who besides suggesting the numerical improvement just noted proposed the material that appears in \S 2. We are also grateful to another referee of an earlier version for providing  several  valuable expository and mathematical suggestions. 

We are honored to dedicate this paper to J\'anos Koll\'ar on the   occasion of his sixtieth birthday. Beyond guiding the direction of algebraic geometry over three decades, J\'anos has  been instrumental to the work of the third and fourth authors through his encouragement and generosity with ideas. It is a pleasure to have this opportunity to express our admiration and thanks.

Concerning notation and conventions -- we work throughout over the complex numbers. As usual, morphisms are indicated by solid arrows, while rational mappings are dashed. We have taken the customary liberties in confounding line bundles and divisors. 

\numberwithin{equation}{section}

\section{Birational Positivity and Covering Gonality}

In this section we study the covering gonality of a projective variety $X$, and prove Theorem \ref{Cov.Gon.Thm}
 from the Introduction. The basic strategy is to  bound $\covgon(X)$ in terms of the positivity of the canonical bundle $K_X$. So we start with some remarks on birational measures of positivity  for line bundles.

 Let $X$ be an irreducible projective variety. Given an integer $p \ge 0$, recall that a line bundle $L$  on $X$ is said to be $p$-\textit{very ample} if the restriction map
\[
\HH{0}{X}{L} \lra \HH{0}{X}{L \otimes \OO_{\xi}} 
\] is surjective for every finite subscheme $\xi \subseteq X$ of length $p+1$. In other words, one asks that  every subscheme of length $p+1$ imposes independent  conditions on the sections of $L$. The condition we focus on here is a birational analogue of this.
\begin{definition} \label{BVA}
A line bundle $L$ on $X$ \textit{satisfies property $\BVA{p}$} if there exists a proper Zariski-closed subset $Z = Z(L) \subsetneqq X$ depending on $L$ such that
\begin{equation} \label{DefBVA} \HH{0}{X}{L} \lra \HH{0}{X}{L \otimes \OO_{\xi}} 
\end{equation} 
surjects for every finite subscheme $\xi \subset X$ of length $p+1$ whose support is disjoint from $Z$. 
\end{definition}
\noi Thus $\BVA{0}$ is equivalent to requiring that $L$  be effective, and $\BVA{1}$ is what is often called ``birationally very ample."\footnote{Hence ``BVA."} This property was considered in \cite{Kn} and \cite{KSS} (under a different name).

The following remarks yield a supply of examples.
\begin{example} \label{Examples.BVA} Let $X$ be an irreducible projective variety.
\begin{enumerate}
\item[(i).] If $L$ is a line bundle on $X$ satisfying $\BVA{p}$ and $E$ is an effective divisor on $X$, then  $\OO_X(L+E)$ satisfies $\BVA{p}$.
\vskip 4pt

\item[(ii).] Suppose that $f : X \lra Y$ is a birational morphism of irreducible projective varieties. If $L$ is a line bundle on $Y$ satisfying $\BVA{p}$, then $f^*L$ satisfies $\BVA{p}$ on $X$.
\vskip4pt
\item[(iii).] More generally,  let $f : X \lra Y$ be a morphism which is birational onto its image, and suppose that $L$ satisfies $\BVA{p}$ on $Y$. Assume moreover that $f(X)$ is not contained in the exceptional set $Z \subseteq Y$ arising in the definition of property $\BVA{}$. Then $f^*L $ satisfies $\BVA{p}$ on $X$.
\vskip 4pt
\item[(iv).] Suppose that 
\[ f : X \lra \PP \] is a morphism  from $X$ to some projective space which is birational onto its image. Then $f^*\OO_\PP(p)$ satisfies $\BVA{p}$.
\vskip4pt
\item[(v).] Suppose that 
\[  X  \ \subseteq \ \PP^{n+1} \]
is a normal hypersurface of degree $d\ge n+2$ with at worst canonical singularities, and let 
$ \mu : X^\pr \lra X$
be a resolution of singularities. Then the canonical bundle $K_{X^\pr}$ of $X^\pr$ satisfies $\BVA{d-n-2}$. 
\end{enumerate}

\noi Indeed, (i), (ii) and  (iii) are clear from the definition, while (iv) is a consequence of  (ii) and the elementary fact that $\OO_\PP(p)$ is $p$-very ample. For (v), it follows from the definition of canonical singularities that 
\[   K_{X^\pr} \ \lin \ (d - n -2)H + E, \]
where $H$ is the pullback of the hyperplane bundle on $X$ and $E$ is effective. So the assertion follows from (i) and (iv). \qed
\end{example} 

The relevance of this notion to questions of gonality arises from the following  elementary observation.
\begin{lemma} \label{Gon.Bound.Curve.Lemma}
Let $C$ be a smooth projective curve of genus $g$ whose canonical bundle $K_C$ satisfies $\BVA{p}$. Then 
\[ \gon(C) \ge p+2.\]
\end{lemma}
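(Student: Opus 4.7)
The plan is a standard Riemann--Roch / Serre duality contrapositive. Suppose for contradiction that $\gon(C) = k \le p+1$, with the aim of exhibiting a reduced subscheme $\xi \subset C$ of length $p+1$, supported off the exceptional locus $Z$ from Definition \ref{BVA}, which fails to impose independent conditions on $\linser{K_C}$. Before starting, note that property $\BVA{p}$ with $p \ge 0$ implies in particular that $K_C$ is effective (part (i) of Definition), so $g \ge 1$ and hence $C \not\cong \PP^1$, i.e., $k \ge 2$.

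Since $k = \gon(C) \ge 2$, the curve $C$ carries a base-point-free $g^1_k$, i.e., a pencil $\linser{D}$ of degree $k$. Because $\linser{D}$ is base-point-free and $Z \subset C$ is a finite set of points (being a proper Zariski-closed subset of the curve), only finitely many members of $\linser{D}$ meet $Z$; moreover, a general member of a base-point-free pencil is reduced. Thus I can choose $D' \in \linser{D}$ whose support consists of $k$ distinct points, none of them in $Z$. If $k < p+1$, adjoin $p+1 - k$ further general points of $C \setminus Z$, distinct from the support of $D'$, to obtain a reduced effective divisor $E$ of degree $p+1$ with $\text{supp}(E) \cap Z = \emptyset$ and $h^0(C, \OO_C(E)) \ge h^0(C, \OO_C(D')) \ge 2$.

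Now I compute. Let $\xi = \text{supp}(E)$, a length-$(p+1)$ reduced subscheme. By Serre duality and Riemann--Roch on $E$,
\[
h^0(C, K_C - E) \, = \, h^1(C, E) \, = \, h^0(C, E) - \deg(E) + g - 1 \, \ge \, 2 - (p+1) + g - 1 \, = \, g - p.
\]
On the other hand, $\xi$ would impose independent conditions on $H^0(C, K_C)$ precisely when $h^0(C, K_C - E) = h^0(C, K_C) - (p+1) = g - p - 1$. The inequality $h^0(C, K_C - E) \ge g - p > g - p - 1$ therefore shows the restriction map in \eqref{DefBVA} cannot be surjective for this $\xi$, contradicting $\BVA{p}$.

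The argument is essentially routine; the only small point that needs care is the reduction to a reduced $\xi$ disjoint from $Z$, which uses only that $Z$ is finite and that a general divisor in a base-point-free pencil on a curve is reduced. I expect no substantial obstacle, since the main mechanism --- pencils produce divisors that fail to impose independent conditions on the canonical series --- is a direct consequence of Riemann--Roch.
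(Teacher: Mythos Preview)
Your argument is correct and follows essentially the same route as the paper's proof: a base-point-free pencil of degree $k$ produces, via Riemann--Roch and Serre duality, a divisor failing to impose independent conditions on $\linser{K_C}$, contradicting $\BVA{p}$ when $k \le p+1$. The paper's version is terser --- it simply observes that the divisor of any section of a globally generated line bundle $A$ of degree $d \le g-1$ fails to impose independent conditions on $\linser{K_C}$ and hence $d \ge p+2$ --- while you spell out the extension of the length-$k$ divisor to length $p+1$ and the avoidance of the finite exceptional set $Z$, both of which the paper leaves implicit.
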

\begin{proof}
We may suppose $g \ge 2$. Let $A$ be a globally generated line bundle of degree $d \le g-1$ on $C$. Then the divisor $\xi$ of any section of $A$ fails to impose independent conditions on $\linser{K_C}$. Hence if $K_C$ satisfies $\BVA{p}$ then one must have $d \ge p+2$. \end{proof}

We now turn to  coverings by curves of specified gonality. Let $X$ be an irreducible projective variety. 
\begin{definition} \label{Def.Cov.Fam.Gon} A \textit{covering family of curves of  gonality $c$} on $X$ consists of a  smooth family
\[  \pi :  \mathcal{C} \lra T \]
of  irreducible projective curves parametrized by an irreducible variety $T$, together with a dominant morphism
\[   f : \mathcal{C} \lra X, \]
satisfying:
\begin{enumerate}
\item[(i).] For a general point $t \in T$, the fibre
$C_t \,=_\text{def} \, \pi^{-1}(t) $
is a smooth curve with $\gon(C_t) = c$; and
\vskip 5pt
\item[(ii).] For general $t \in T$, the  map
$ f_t : C_t \lra X$
is birational onto its image.
\end{enumerate}
\end{definition}
\noi By standard arguments, the existence of such a family is equivalent to asking that $X$ contains a (possibly singular) curve of gonality $c$ passing through a general point.

\begin{remark} \label{Cov.Gon.Properties} We make some remarks about the formal properties of this definition. 

\noi  (i). \ After replacing $T$ by a desingularization, one can suppose without loss of generality that $T$ and $\mathcal{C}$ are non-singular. 

\noi (ii). \ Given a covering family as above, after restricting to a suitable suvariety of $T$ we may suppose without loss of generality that $\dim \mathcal{C} = \dim X$, so that in particular the morphism
\[  f : \mathcal{C} \lra X \] is generically finite. 

\noi (iii). \ Suppose that 
$  \pi : \mathcal{C} \lra T$, $ f : \mathcal{C} \lra X $
is a covering family   with $\mathcal{C}$ and $T$ non-singular, and let $\nu :   \mathcal{C}^\pr  \lra \mathcal{C}$
be the blowing up of $\mathcal{C}$ along a smooth center. Then there is a non-empty Zariski-open subset $T_0 \subseteq T$ over which the restrictions of the two maps
\[   \mathcal{C}^\pr \lra T  \ \ , \ \ \mathcal{C} \lra T \]
coincide.
(Since blowing up along a divisor has no effect, we can assume that this center has codimension $\ge 2$, and hence maps to a subset of $T$ having codimension $\ge 1$.)

\noi (iv). Let  $\pi : \mathcal{C} \lra T$, $f : \mathcal{C} \lra X$ be a covering family with $\mathcal{C}$ and $T$ smooth, and let 
\[ \mu :  X^\pr \lra X \]
be a birational morphism. Then there is a non-empty Zariski-open subset $T_0\subseteq T$ so that the restriction 
$ \pi_0: \mathcal{C}_0  \lra T_0 $
extends to a family 
\[  f^\pr : \mathcal{C}_0 \lra X^\pr. \]
(In fact, by a suitable sequence of blow-ups, we can  construct a modification $\mathcal{C}^\pr \lra \mathcal{C}$ that admits an extension $f^\pr : \mathcal{C}^\pr \lra X^\pr$. The assertion then follows from (iii).)
\end{remark}

As in the Introduction, we focus on the smallest gonality of such a covering family: \begin{definition} The \textit{covering gonality} $\covgon(X)$ of $X$ is the least integer $c>0$ for which such a covering family exists. 
\end{definition}
\noi It follows from Remark \ref{Cov.Gon.Properties} (iv) that this is indeed a birational invariant. 

\begin{example} \textbf{(Examples of covering gonality).} \label{Cov.Gon.Exs}  Here are some examples where the covering gonality can be estimated or computed. 

 \noi (i). \  Let $X$ be a $K3$ surface. By a theorem of Bogomolov and Mumford (\cite[p. 351]{MM}) $X$ is covered by (singular) elliptic curves. Hence $\covgon(X) = 2$. If $X$ is an abelian surface, then similarly $\covgon(X) = 2$: in fact, $X$ is covered by curves of genus $\le 2$.\footnote{This is evident if $X$ is principally polarized, and in general $X$ is covered by a such a surface.} 
  
\noi (ii). \ Let $X = C_2$ be the symmetric square of a smooth   curve $C$ of genus $g \ge 3$. Then $X$ is covered by copies of $C$ via the double covering $C \times C \lra X$. Bastianelli \cite{Bast} shows that these curves compute the covering gonality of $X$, ie $\covgon(X) = \gon(C)$.

\noi (iii). \    Let $X \subseteq \PP^3$ be a smooth surface of degree $d \ge 4$,  let $x \in X$ be a general point, and let $T_x \subseteq \PP^3$ be the tangent plane to $X$ at $x$. Then
\[  D_x = T_x \cap X\]
is an irreducible plane curve of degree $d$ with a double point, which has gonality $d-2$. Therefore $\covgon(X) \le d-2$. In fact, Lopez and Pirola \cite{LP} show that this is the unique family of minimal gonality for general $X$, and hence $\covgon(X) = d-2$. One can evidently arrange for such a curve to pass through two general points of $X$, and hence $\conngon(X) = d-2$. 

\noi (iv). \ Suppose now that $X \subseteq \PP^4$ is a smooth threefold of degree $d \ge 5$. A dimension count predicts that  $X$ should be covered by a two-dimensional family of plane curves of degree $d$ with  triple points. One can prove -- either directly or (as Jason Starr pointed out) by a degeneration -- that this is indeed the case. Hence \[ \covgon (X) \ \le  \ d - 3, \] and the same inequality holds \textit{a fortiori} for hypersurfaces of degree $d$ and  larger dimension. It then follows from Corollary \ref{Cov.Gon.Hypsfs} that $\covgon(X) = d-3$ for a general threefold of degree $d$.

\noi (v). \ Let $X \subseteq \PP^{n+1}$ be a hypersurface of degree $d> n$ having an ordinary singular point $p \in X$ of multiplicity $n$: in particular, $X$ has only canonical singularities. Projection from $p$ gives rise to a rational map $X \dra \PP^n$ of degree $d - n$, and the inverse images of lines $\ell \subseteq \PP^n$ then yield a covering of $X$ by curves of gonality $\le d-n$. Therefore $\covgon(X) \le d-n$, and it follows from Corollary \ref{Cov.Gon.Hypsfs} below that in fact $\covgon(X) = d-n$.  \qed
 \end{example}
 
 \begin{remark} \textbf{(Covering gonality of very general hypersurface.)} By starting with  some of  the ideas going into the proof of Theorem \ref{BCP.Conj.Thm.} in \S \ref{Deg.Irrat.Section}, the first author, Ciliberto, Flamini and Supino \cite{BCFetal} have computed the covering gonality of a \textit{very general} hypersurface $X_d \subseteq \PP^{n+1}$ of degree $d \gg 0$ in almost all cases. Specifically, they show that \[   \covgon(X_d) \ \approx \ d - 2\sqrt{n}. \]
The numerics here are essentially what one find by looking for plane curves with singular points covering $X$, as in Examples \ref{Cov.Gon.Exs} (iii) and (iv). \qed
\end{remark}

Recall that if $D \lra C$ is a branched covering of irreducible projective curves, then $\gon(D) \ge \gon(C)$. This implies the analogous statement for covering gonality.   \begin{lemma} \label{Covering.Lemma}
 Let $f : X \lra Y$ be a generically finite surjective mapping between irreducible projective varieties. Then 
 \[  \covgon(X) \ge \covgon(Y). \qed\]
 \end{lemma}

The main theorem of this section asserts that the covering gonality of a smooth projective variety is bounded by the positivity of its canonical bundle. When $\dim X = 2$ the statement was established in \cite[\S 3]{KSS}. 
\begin{theorem} \label{Cov.Gon.via.BVA.Thm}
Let $X$ be a smooth projective variety, and suppose that there is an integer $p \ge 0$ such that its canonical bundle $K_X$ satisfies property $\BVA{p}$. Then
\[  \covgon(X) \ \ge \ p+2. \]
\end{theorem}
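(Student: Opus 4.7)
The plan is to reduce the statement to the curve case already handled in Lemma \ref{Gon.Bound.Curve.Lemma}. That lemma guarantees that any smooth projective curve $C$ whose canonical bundle satisfies $\BVA{p}$ has gonality at least $p+2$, so it is enough to exhibit, inside any given covering family of curves on $X$, a general member $C_t$ whose canonical bundle $K_{C_t}$ inherits property $\BVA{p}$ from $K_X$. The machinery set up in Example \ref{Examples.BVA} is precisely what is needed to propagate $\BVA{p}$ from $X$ down to $C_t$.

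Concretely, I would argue by contradiction: suppose $\covgon(X) \le p+1$, and pick a covering family $\pi : \mathcal{C} \to T$, $f : \mathcal{C} \to X$ of curves of gonality $\le p+1$. Using Remark \ref{Cov.Gon.Properties} (i) and (ii) I may assume that $\mathcal{C}$ and $T$ are smooth and that $f$ is generically finite and surjective. Adjunction applied to a general smooth fibre $C_t = \pi^{-1}(t)$ gives
\[
K_{C_t} \ = \ K_{\mathcal{C}/T}\big|_{C_t} \ = \ K_{\mathcal{C}}\big|_{C_t},
\]
where the last equality uses that $\pi^* K_T$ restricts trivially to a fibre. The ramification formula for $f$ then yields
\[
K_{C_t} \ \lin \ (f^* K_X)\big|_{C_t} \, + \, R\big|_{C_t},
\]
where $R$ is the (effective) ramification divisor of $f$.

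Now let $Z \subsetneq X$ be the exceptional locus witnessing $\BVA{p}$ for $K_X$. Since $f$ is dominant, $f^{-1}(Z)$ is a proper closed subset of $\mathcal{C}$, so for $t$ in a non-empty Zariski-open subset of $T$ we have both that $f_t = f|_{C_t} : C_t \to X$ is birational onto its image (definition of covering family) and that $f_t(C_t) \not\subseteq Z$. Example \ref{Examples.BVA} (iii) then implies that $(f_t)^* K_X = (f^* K_X)|_{C_t}$ satisfies $\BVA{p}$ on $C_t$, and Example \ref{Examples.BVA} (i) shows that tensoring with the effective divisor $R|_{C_t}$ preserves $\BVA{p}$. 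Hence $K_{C_t}$ itself satisfies $\BVA{p}$, and Lemma \ref{Gon.Bound.Curve.Lemma} forces $\gon(C_t) \ge p+2$, contradicting the assumed bound on the gonality of the family.

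The only delicate point is the interplay between the generic-finiteness of $f$, the adjunction/ramification step, and the genericity needed to stay away from the exceptional set $Z$; each of these is handled by shrinking $T$ to a suitable Zariski-open subset, as permitted by the remarks on covering families. I do not anticipate any other serious obstacle — once the correct form of $\BVA{p}$ on $C_t$ is obtained, the curve lemma does all the work.
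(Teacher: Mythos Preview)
Your proposal is correct and follows essentially the same route as the paper's proof: both use adjunction along the smooth fibre and the ramification formula for $f$ to write $K_{C_t}$ as $(f_t)^*K_X$ plus an effective divisor, then invoke Example \ref{Examples.BVA} (i), (iii) and Lemma \ref{Gon.Bound.Curve.Lemma}. The only differences are cosmetic --- you phrase the argument by contradiction, and the paper makes explicit the point that a general $C_t$ meets the ramification divisor properly, which you are using implicitly when you restrict $R$ to $C_t$.
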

\begin{proof}
This is very elementary. Suppose that 
\[  \pi : \mathcal{C} \lra T \ \ , \ \ f : \mathcal{C} \lra X \]
is a covering family of curves of gonality $c$. Thanks to Remark \ref{Cov.Gon.Properties} (i) and (ii), there is no loss of generality in assuming that $\mathcal{C}$ and ${T}$ are smooth, and that $f$ is generically finite. Then 
\[   K_\mathcal{C} \ \lin \ f^* K_X \, + \, E  \tag{*} \]
where $E = \text{Ram}(f)$ is the ramification divisor of $f$. On the other hand, since $\pi$ is smooth  one has
\[   K_{C_t} \ \lin \ K_{\mathcal{C}} \mid C_t \tag{**} \]
for every $t \in T$. Furthermore, if $t \in T$ is general, then $C_t$ meets the effective divisor $E$ properly, and its image 
\[  f_t(C_t) \ \subseteq \ X \]
will not be contained in the exceptional set $Z(K_X) \subseteq X$ arising in Definition \ref{BVA}.   Since by definition $f_t : C_t \lra X$ is birational onto its image, if follows from (*), (**) and Example \ref{Examples.BVA} (iii) that $K_{C_t}$ satisfies property $\BVA{p}$. Hence $c \ge p+2$ thanks to Lemma \ref{Gon.Bound.Curve.Lemma}. \end{proof}

\begin{corollary} \label{Cov.Gon.Hypsfs} Let $X \subseteq \PP^{n+1}$ be a smooth hypersurface of degree $d \ge n+2$. Then \[ \covgon(X) \ \ge \ d-n. \] The same statement holds if $X$ is normal with only canonical singularities.
\end{corollary}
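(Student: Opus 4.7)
The plan is to realize this as an immediate consequence of Theorem \ref{Cov.Gon.via.BVA.Thm} combined with the examples of property $\BVA{p}$ already listed in Example \ref{Examples.BVA}. The only thing to do is to produce a smooth model of $X$ whose canonical bundle satisfies $\BVA{d-n-2}$, and then plug in $p=d-n-2$.

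First I would treat the smooth case. Here the adjunction formula gives $K_X \sim (d-n-2)H$, where $H$ is the restriction of the hyperplane class from $\PP^{n+1}$. Since $H$ is very ample, the bundle $(d-n-2)H$ is $(d-n-2)$-very ample, so in particular it satisfies $\BVA{d-n-2}$; equivalently, this is Example \ref{Examples.BVA}(iv) applied to the embedding $X \hookrightarrow \PP^{n+1}$ composed with the $(d-n-2)$-uple Veronese, or simply Example \ref{Examples.BVA}(v) specialized to the smooth case. Theorem \ref{Cov.Gon.via.BVA.Thm} then yields
\[ \covgon(X) \ \ge \ (d-n-2) + 2 \ = \ d - n, \]
which is the first assertion.

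For the case in which $X$ is only normal with canonical singularities, I would choose a resolution of singularities $\mu : X^\pr \lra X$. By Example \ref{Examples.BVA}(v) the canonical bundle $K_{X^\pr}$ satisfies $\BVA{d-n-2}$, and hence Theorem \ref{Cov.Gon.via.BVA.Thm} again gives $\covgon(X^\pr) \ge d-n$. Finally, as noted in the text immediately after the definition of $\covgon$ (and as a consequence of Remark \ref{Cov.Gon.Properties}(iv)), covering gonality is a birational invariant, so $\covgon(X) = \covgon(X^\pr) \ge d - n$.

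There is really no obstacle here: all the technical content has been absorbed into Theorem \ref{Cov.Gon.via.BVA.Thm} and Example \ref{Examples.BVA}. The only small point worth stating carefully is the birational invariance of $\covgon$ in the singular case, which is why one must work on the resolution rather than on $X$ itself, since $\BVA{p}$ is formulated on a smooth model.
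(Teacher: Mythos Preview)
Your proof is correct and follows essentially the same route as the paper: in the smooth case observe that $K_X = \OO_X(d-n-2)$ is $(d-n-2)$-very ample and apply Theorem~\ref{Cov.Gon.via.BVA.Thm}, while in the singular case pass to a resolution and invoke Example~\ref{Examples.BVA}(v). One small wording correction: it is not that $\BVA{p}$ itself requires a smooth model (Definition~\ref{BVA} is stated for any irreducible projective variety), but rather that Theorem~\ref{Cov.Gon.via.BVA.Thm} is formulated for smooth $X$, which is why one works on the resolution.
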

 \noi Note that if we allow canonical singularities, then Example \ref{Cov.Gon.Exs} (v) shows that the statement is best possible for all $n \ge 2$ and $d \ge n+2$.  When $n = 1$ we recover Noether's result that a smooth plane curve of degree $d$ has gonality $d-1$. 

\begin{proof}[Proof of Corollary]
When $X$ is smooth, its canonical bundle $\omega_X = \OO_X(d-n-2)$ is already $(d-n-2)$-very ample. For the second statement, we can pass to a desingularization, and then Example \ref{Examples.BVA} (v) applies. 
\end{proof}

We observe next that a sufficiently positive divisor on any smooth variety has large covering gonality. 
\begin{corollary}
Let $M$ be a smooth projective variety, and let $A$ be a very ample line bundle on $M$. Fix an integer $e$ such that 
 $\linser{(e+2)A + K_M}$ is basepoint-free, and let \[ X = X_d \in \linser{dA}\] be any smooth divisor. Then
 \[  \covgon(X) \ \ge \ d - e. \]
\end{corollary}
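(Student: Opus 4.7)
The plan is to deduce the corollary directly from Theorem \ref{Cov.Gon.via.BVA.Thm}: it suffices to show that $K_X$ satisfies property $\BVA{d-e-2}$, and then the inequality $\covgon(X) \ge (d-e-2)+2 = d-e$ falls out immediately. Adjunction gives $K_X \lin (K_M + dA)|_X$, and the natural decomposition to work with is
$$K_X \ \lin \ \big(K_M + (e+2)A\big)\big|_X \ + \ (d-e-2)\,A|_X.$$
I would then verify the positivity of each summand using the formal properties recorded in Example \ref{Examples.BVA}.

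For the second summand, $A|_X$ is very ample on $X$ (since $A$ is very ample on $M$), so it defines a closed embedding of $X$ into some projective space $\PP$ under which $(d-e-2)A|_X$ is the pullback of $\OO_\PP(d-e-2)$. Example \ref{Examples.BVA}(iv) then guarantees that this summand satisfies $\BVA{d-e-2}$ on $X$. For the first summand, the hypothesis that $|K_M + (e+2)A|$ is basepoint-free on $M$ yields a section whose divisor does not contain the irreducible subvariety $X$; restricting that divisor to $X$ exhibits $(K_M+(e+2)A)|_X$ as an effective divisor. Example \ref{Examples.BVA}(i) --- adding an effective divisor preserves $\BVA{p}$ --- then combines the two statements to show that $K_X$ itself satisfies $\BVA{d-e-2}$, and Theorem \ref{Cov.Gon.via.BVA.Thm} completes the proof.

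No substantive obstacle arises here: the corollary is really a packaging statement combining Theorem \ref{Cov.Gon.via.BVA.Thm} with the hereditary features of property $\BVA{p}$ collected in Example \ref{Examples.BVA}. One only needs the harmless assumption $d \ge e+2$ so that the exponent $d-e-2$ is nonnegative; in the boundary case $d = e+2$ the second summand vanishes and $K_X = (K_M+(e+2)A)|_X$ is simply effective, which is precisely $\BVA{0}$ and still yields $\covgon(X) \ge 2 = d-e$.
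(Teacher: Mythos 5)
Your proof is correct and follows essentially the same route as the paper: the same decomposition $K_X \lin (d-e-2)A|_X + (K_M+(e+2)A)|_X$, with the very ampleness of $A$ supplying property $\BVA{d-e-2}$ for the first summand, basepoint-freeness supplying effectivity of the second, and Example \ref{Examples.BVA}(i) plus Theorem \ref{Cov.Gon.via.BVA.Thm} finishing. Your remark on the trivial boundary case $d \le e+2$ is a harmless extra detail the paper leaves implicit.
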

\begin{proof}
In fact, \[ K_{X} \ =  \ \big (K_M + dA \big)\mid X \ = \ \big( (d-e-2)A + E \big) \mid X, \]
where $\linser{E}$ is free. Since $A$ is very ample, $\OO_X\big((d-e-2)A\big)$ is $(d-e-2)$-very ample, and therefore $K_{X}$ satisfies Property $\BVA{d-e-2}$. 
\end{proof}

Finally, we say a word about the connecting gonality of an irreducible projective variety $X$. An evident modification of Definition \ref{Cov.Gon.Exs} leads to the notion of a family of curves of gonality $c$ connecting two general points of $X$, and as in the Introduction the least such gonality is defined to be $\conngon(X)$. Clearly  \[ \covgon(X) \ \le \ \conngon(X),\] and the example of a uniruled variety which is not rationally connected shows that the inequality can be strict. Moreover the analogue of Lemma \ref{Covering.Lemma} remains valid. Unfortunately, we do not at the moment know any useful ways of controlling this invariant. For example, when  $X$ is  the symmetric square of a curve of large genus, as in Example \ref{Cov.Gon.Exs} (ii), we suspect that $\covgon(X) < \conngon(X)$, but we do not know how to prove this.

\section{Voisin's Invariant}

In this section, we sketch the basic properties of a Chow-theoretic measure of irrationality. This material was suggested to us by Claire Voisin.

Let $X$ be a smooth complex projective variety of dimension $n$. A \textit{one parameter family of Chow-constant $zero$-cycles} on $X$ consists of a family of effective zero cycles $\{ Z_t\}_{t \in T}$ parametrized by a smooth irreducible curve $T$ with the property that all the $Z_t$ are rationally equivalent to a fixed cycle. Voisin's idea is to consider the least degree of such cycles passing through a general point of $X$. 
\begin{definition} We define the \textit{Voisin invariant} $v(X)$ to be the least positive integer $v > 0$ with the property:
\begin{quote} For  any proper algebraic subset $W \subseteq X$, and a general point $x \in X$ not lying on $W$, there exists a one-parameter family $\{ Z_t \}_{t \in T}$ of reduced Chow constant zero cycles of degree $v$ with the properties that
\vskip5pt
\begin{itemize}
\item[(i).]
$x \, \in \, Z_0 \ \text{for some } 0 \in T$;
\vskip 5pt
\item[(ii).] $\textnormal{Supp}(Z_t) \ \text{is disjoint from } W  \text{ for general } t \in T$. 
\end{itemize}
\end{quote}
\end{definition}
\noi It follows from (ii) that $v(X)$ is a birational invariant of $X$.

\begin{example}
One evidently has
\[   v(X) \ \le \ \covgon (X). \]
If $\chow_0(X)$ is trivial, then $v(X)=1$. In particular, if $X$ is Chow-trivial but not uniruled, then $v(X) < \covgon(X)$. 
\end{example}

The following result generalizes Theorem \ref{Cov.Gon.via.BVA.Thm}:
\begin{theorem} \label{Voisin.Invariant.Theorem}
If $K_X$ satisfies $\BVA{p}$, then $v(X) \ge p+2$.
\end{theorem}

\begin{proof}[Proof]
Assume  there is a one-dimensional family of Chow-constant zero-cycles of degree $v$ passing through a general point of $X$. Then by a standard argument they must fit together in an $n$-dimensional family of cycles dominating $X$. More precisely, there exists an $n$-dimensional smooth irreducible variety $S$ admitting a morphism
\[    F : S \lra \Sym^v(X) \]
with the properties that
\begin{itemize}
\item[(i).] The pull-back 
\[   Z_S\ =_{\text{def}} S \, \times_{\Sym^v(X)} \, \big( \, X \times \Sym^{v-1}(X)\, \big ) \]
to $S$ of the universal zero-cycle dominates $X$;
\vskip 5pt
\item[(ii).] $Z_S$ is generically \'etale over $S$ (ie the generic cycle $Z_s$ is reduced).
\end{itemize}
After possibly shrinking $S$, we are free to suppose that $Z_S$ is actually \'etale over $S$.
In this setting, Mumford \cite{Mumford} constructs a trace mapping
\[   \textnormal{Tr}_F : H^0(X, K_X) \lra H^0(S, K_S): \]
for $\eta \in H^0(X, K_X)$ and $s \in S$, $\textnormal{Tr}_F(\eta)$ is determined by the formula
\[   \textnormal{Tr}_F(\eta)(s) \ = \ \sum_{x \in Z_s} \eta(x). \]
Lemma \ref{Technical.Trace.Lemma} below shows that because $S$ is constructed from  Chow-constant one-parameter families, one has
\[   \textnormal{Tr}_F \ = \ 0. \]
But this implies that  the points of the general cycle $Z_s$ satisfy the 
\textit{Cayley-Bacharach property} with respect to $\linser{K_X}$, ie any $n$-form vanishing at all but one of the points of $Z_s$ vanish at the remaining one. (Compare for instance \cite[Proposition 2.3]{BCD}  or \cite[\S 3.2 -- \S 3.4]{GP}.) Therefore $\BVA{v-1}$ fails for $X$, as required.
\end{proof}

\begin{lemma}\label{Technical.Trace.Lemma}
In the setting of the proof of Theorem \ref{Voisin.Invariant.Theorem}, 
\[  \trace_F(\eta) \, = \, 0 \ \ \text{for any } \eta \in H^0(X, K_X).\]
\end{lemma}

\begin{proof}[Sketch of Proof]
For lack of a suitable reference, we sketch the modifications of the arguments from \cite{Mumford} required to verify the assertion. 
Note to begin with that to give a one-parameter family $\{ Z_t \}_{t \in T}$ of Chow-constant degree $v$ zero-cycles amounts to specifying maps
\[  f : T \lra \Sym^v(X) \ \ , \ \ a : T \lra \Sym^w(X) \]
together with a morphism
\[   h : T \times \PP^1 \lra \Sym^{w+v}(X) \]
satisfying
\[   h(t,0) = f(t_0) + a(t) \ \ , \ \ h(t , \infty) = f(t) + a(t), \]
where $t_0 \in T$ is a fixed point. In the setting of the proof of Theorem \ref{Voisin.Invariant.Theorem}, we can therefore suppose (after possibly shrinking and replacing $S$ by an \'etale covering) that we have:
\begin{itemize}
\item[(i).] A smooth surjective morphism 
$ \pi : S \lra B$ of relative dimension one together with a section 
$\sigma: B \lra S$ fixing a base-point on the fibres of $\pi$;
\vskip 5pt
\item[(ii).] A morphism $A : S \lra \Sym^w(X)$;
\vskip 5pt
\item[(iii).] A morphism
\[   H : S \times \PP^1 \lra \Sym^{v+w}(X) \]
satisfying
\[   H(s,0) \, = \, F( \sigma\pi(s)) + A(s)\ \ , \ \ H(s, \infty) \, = \, F(s) + A(s).  \tag{*} \]
\end{itemize}

Now consider a form $\eta \in H^0(X, K_X)$. By the functoriality of Mumford's construction, one has
\[   \trace_{F+A}(\eta) \ = \ \trace_F(\eta) \, + \, \trace_A(\eta). \]
On the other hand, thanks to (*):
\begin{align*}
\trace_H(\eta) \mid S \times \{ \infty \} \ &= \ \trace_{F}(\eta) + \trace_A(\eta) \\
\trace_H (\eta) \mid S \times \{ 0 \} \ &= \ \trace_{F \circ \sigma \circ \pi}(\eta) + \trace_A(\eta)  \, = \, 0 +  \trace_A(\eta),
\end{align*}
the last equality arising from the fact that $F\circ \sigma \circ \pi$ factors through a variety of dimension $n -1$. But since $\PP^1$ carries no canonical forms, 
\[  \trace_H(\eta) \mid S \times \{\lambda \}  \, \in \, H^0(S, K_S) \]
is independent of $\lambda \in \PP^1$. 
Thus $\trace_F(\eta) = 0$, as required.
\end{proof}

\section{Degree of Irrationality of Projective Hypersurfaces}
\label{Deg.Irrat.Section}

In this section we discuss the degree of irrationality and give the proof of Theorem \ref{BCP.Conj.Thm.} from the Introduction. 

We start with some general remarks about the irrationality degree $\irrdeg(X)$ of an irreducible complex projective variety $X$ of dimension $n$. Recall from the Introduction that this is defined to be the least degree of a dominant rational map
\[  f : X \dra \PP^n. \]
Equivalently,  one can characterize $\irrdeg(X)$ as the minimal degree of a field extension
\[    \CC(t_1, \ldots, t_n) \ \subseteq \ \CC(X)  \]
where the $t_i \in \CC(X)$ are algebraically independent rational functions on $X$. We refer to \cite{Y1}, \cite{Y2}, \cite{Y3}, \cite{Y4}, \cite{Bast} for some computations and estimations of $\irrdeg(X)$, especially  in the case of surfaces. 

Given a rational covering $f : X \dra \PP^n$, observe that the inverse images of lines $\ell \subseteq \PP^n$ determine a family of curves  of gonality $\le \deg(f)$ connecting two general points on $X$. This shows that
\begin{equation}\label{Ineq.Invars.In.Section}
\covgon(X) \ \le \ \conngon(X) \ \le \ \irrdeg(X).
\end{equation}
The existence of rationally connected varieties that are not rational  -- as well as many other examples -- illustrates that the gonality invariants can be strictly smaller than $\irrdeg(X)$. However by combining \eqref{Ineq.Invars.In.Section} with Theorem \ref{Cov.Gon.via.BVA.Thm}
 we find:
\begin{corollary} Let $X$ be a smooth projective variety whose canonical bundle $K_X$ satisfies Property $\BVA{p}$ for some $p \ge 0$. Then
\[ \irrdeg(X) \ \ge \ p+2. \hfill\qed\]
\end{corollary}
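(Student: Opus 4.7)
The plan is to deduce this statement immediately from Theorem \ref{Cov.Gon.via.BVA.Thm} together with the chain of inequalities \eqref{Ineq.Invars.In.Section}. No new content is needed; the statement is a direct combination of what has already been established.

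Specifically, I would first recall the (already noted) fact that $\covgon(X) \le \irrdeg(X)$. If $f : X \dra \PP^n$ is a dominant rational mapping of minimal degree $\delta = \irrdeg(X)$, then choosing a general pencil of lines in $\PP^n$ and taking the irreducible component of each preimage through a general point of $X$ produces a covering family of curves, each mapping generically $\le \delta$-to-one onto $\PP^1$. Hence these curves have gonality at most $\delta$, giving $\covgon(X) \le \delta = \irrdeg(X)$.

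Then I would apply Theorem \ref{Cov.Gon.via.BVA.Thm}: since $K_X$ satisfies $\BVA{p}$, one has $\covgon(X) \ge p+2$. Combining, $\irrdeg(X) \ge \covgon(X) \ge p+2$, as desired. There is no serious obstacle here; the only bookkeeping required is the standard irreducibility/Bertini argument to extract an irreducible covering family from the preimages of lines, and this is already implicit in the derivation of \eqref{Ineq.Invars.In.Section} earlier in the section.
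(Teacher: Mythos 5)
Your proposal is correct and is exactly the paper's argument: the corollary is stated there as an immediate consequence of combining the inequality $\covgon(X) \le \conngon(X) \le \irrdeg(X)$ from \eqref{Ineq.Invars.In.Section} (obtained by pulling back lines under a rational covering) with Theorem \ref{Cov.Gon.via.BVA.Thm}. Nothing further is needed.
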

\noi As above (Examples \ref{Examples.BVA} (v) and \ref{Cov.Gon.Exs} (v)),  equality holds for the desingularization of  a hypersurface  of degree $d$ in $\PP^{n+1}$ with an ordinary $n$-fold point.

\begin{remark}  \label{Direct.Proof.Irr.Bound}
One can give a direct proof of (a strengthening of) the Corollary using  results and methods of  \cite{LP}, \cite{Bast}, \cite{BCD}  and \cite{GP}, involving correspondences with null trace and the Cayley-Bacharach property, along the lines of the proof of Theorem \ref{Voisin.Invariant.Theorem}.  Specifically, consider a dominant  rational map 
\[   f : X \dra Y  \]
 between two smooth projective $n$-folds. We claim:
\begin{equation} \label{Null.Trace.Eqn} \text{  If $K_X$ satisfies $\BVA{p}$ and $H^0(Y,K_Y ) = 0$, then 
$ \deg f \ \ge \ p+2. $} \end{equation}
In fact, given any rational covering $f$ one has a trace map
\[ \textnormal{Tr}_f : H^0(X, K_X) \lra H^0(Y,K_Y) \] 
on canonical forms. As before, for $\eta \in H^0(X, K_X)$ and a general point $y \in Y$,  one can view the value of $\textnormal{Tr}_f(\eta)$ at $y$ as being computed by averaging  the values of $\eta$ over the fibre $f^{-1}(y)$ of $y$. 
It follows (as in the proof of Theorem \ref{Voisin.Invariant.Theorem}) that if $H^0(Y, K_Y) = 0$, then  $f^{-1}(y)$ satisfies the Cayley-Bacharach property with respect to $\linser{K_X}$, ie any $n$-form vanishing on all but one of the points of $f^{-1}(y)$ must vanish on the remaining one. (See for instance \cite[Proposition 2.3]{BCD}  or \cite[\S 3.2 -- \S 3.4]{GP}.) In particular, these points do not impose independent conditions on $H^0(X, K_X)$, and \eqref{Null.Trace.Eqn}  follows. \qed \end{remark}

\begin{remark}
Voisin has pointed out to us that one can also prove  a variant of the statement \eqref{Null.Trace.Eqn} from the previous remark. Specifically,  consider   a smooth polarized projective $n$-fold   $(X, H)$ with the property that the resulting primitive Hodge-structure $H^n(X, \QQ)_{\text{prim}}$ is irreducible: this holds for instance for a very general hypersurface $X \subseteq \PP^{n+1}$ of degree $ > n+2$. Suppose moreover that $K_X$ satisfies property $\BVA{p}$  with $p \ge 1$. If $Y$ is \textit{any} smooth projective variety of dimension $n$, then any rational covering
\[  f : X \dra Y \ \ \text{ with } \deg(f) < p+2 \]
must actually be birational. In fact, assume to the contrary that $f$ is not birational. The mapping $f^* H^0(Y, K_Y) \lra H^0(X, K_X)$, which in any event is injective, must be surjective or zero, else it would give a non-trivial Hodge substructure of $H^n(X, \QQ)_{\text{prim}}$. The former possibility is impossible since $K_X$ satisfies $\BVA{1}$, and therefore it must be the case that $H^0(Y, K_Y) = 0$. Then the previous remark applies. (Compare \cite[Proposition 3.5.2]{GP}.)
\end{remark}

We now turn to the case of a smooth hypersurface
\[   X \ \subseteq\  \PP^{n+1}\]
of dimension $n \ge 2$ and degree $d \ge n+2$. Projection from a point of $X$ shows that in any event
\begin{equation} \label{UpperBound irrat hyps} \irrdeg(X) \ \le \ d-1, \end{equation}
and by \cite[Theorem 1.2]{BCD} (or Corollary \ref{Cov.Gon.Hypsfs}  above) one has the lower bound
\begin{equation} \label{LowerBound.irrat hyps} \irrdeg(X) \ \ge \ d - n. \end{equation}

\begin{example} Interestingly enough,   it can actually happen that $\irrdeg(X) < d-1$. For instance, suppose that $X \subseteq \PP^3$ is a surface containing two disjoint lines 
$  \ell_1 ,\ell_2  \subseteq  X$. Then the line   joining general points $p_1 \in \ell_1, p_2 \in \ell_2$ meets $X$ at $(d-2)$ residual points, and this defines a rational mapping
\[   X\,  \dra \,  \ell_1 \times \ell_2 \, \approx \, \PP^2 \]
of degree $d-2$. There are a  few other examples of a similar flavor, and  it is established in \cite[Theorem 1.3]{BCD} that these are the only surfaces of degree $d \ge 5$ in $\PP^3$ for which $\irrdeg(X) = d-2$. In a similar way, if $X \subseteq \PP^{2k+1}$ contains two disjoint $k$-planes then $\irrdeg(X) \le d-2$, but apparently no examples are known of hypersurfaces of odd dimension $\ge 5$ for which equality fails in \eqref{UpperBound irrat hyps}. (See \cite[4.13, 4.14]{BCD}.) \qed \end{example}

Our main goal is to establish Theorem \ref{BCP.Conj.Thm.} from the Introduction. We will work with the following
\begin{setup} \label{Setup}
Denote by $X \subseteq \PP^{n+1}$ a smooth hypersurface  of degree $d$, and suppose given a rational covering
\[    f : X \dra \PP^n \]
of degree $\delta$. We assume that we are in one of the following two situations:
\begin{itemize}
\item[(a).]  $d \ge 2n + 1$ and $\delta \le d - 2$; or
\vskip 5pt
\item[(b).] $d \ge 2n + 2$ and $\delta \le d-1$. 
\end{itemize}
\end{setup}
\noi  The argument will draw on some  constructions and results of \cite{BCD}. Specifically, the rational mapping $f : X \dra \PP^n$ is given by a correspondence 
 \[    Z \ \subseteq \ X \times \PP^n, \]
and for any $y \in \PP^n$ we can view the fibre $Z_y$ -- which in general consists of $\delta$ distinct points of $X$ -- as a subset of $\PP^{n+1}$. With this notation one has:
\begin{proposition} 
\label{BCDProp} Assume that we are in the situation of \ref{Setup} $($a$)$  or $($b$)$. \begin{itemize}
\item[$($i$)$.] For general $y \in \PP^n$ the fibre $Z_y \subseteq \PP^{n+1}$ lies on  a line \[ \ell_y \subseteq \PP^{n+1}.\]
\item[$($ii$)$.] A general point of $\PP^{n+1}$ lies on exactly one of these lines.
\end{itemize}
\end{proposition}

\noi These facts are established in \cite[Theorem 2.5, Lemma 4.1]{BCD} using the ideas involving correspondences with null trace recalled in Remark \ref{Direct.Proof.Irr.Bound}
 above.

We  now explain the basic geometric idea of the proof. Assume that $f$ is not projection from a point. For general $y \in \PP^n$, write
\[   \ell_y \cdot X \ = \ Z_y \, + \, F_y, \]
where $F_y $ is a zero-cycle of degree $d - \delta$, and recall  that we already know that that $d - \delta \le n$. As $y$ varies over a suitable rational subvariety of $\PP^n$, the $F_y$ (or subcycles thereof) will sweep out  a subvariety $S \subseteq X$ of dimension $s \ge 1$ having covering gonality $e \le d - \delta$. We prove that $e$ and $s$ are related by the inequality
\[   e(n-s) \, \le \, n,  \tag{*} \]
meaning roughly that if  $s$ is small then $e$ cannot be too large. On the other hand, a variant of  the arguments of \cite{Ein}  and \cite{Voisin} shows that if $e < d -2n +s$, then a very general hypersurface of degree $d$ does not contain an irreducible subvariety $S \subseteq X$ of dimension $s > 0$ with $\covgon(S) = e$ (Proposition \ref{No.Curves.Small.Gonality}). This turns out to contradict (*) when $d \ge 2n+2$, and with a little more care a similar argument works when $d  = 2n+1$.

 Moving to details, we start by fixing some further notation. We assume until the end of the proof of Theorem \ref{BCP.Conj.Thm.}   that we are in the situation of  \ref{Setup} (a) or (b), so that Proposition \ref{BCDProp} holds. In classical language, the  lines $\{ \ell_y\}$ form a \textit{congruence} of lines, ie a family of lines parametrized  by an irreducible $n$-dimensional subvariety
 \[   B_0 \ \subseteq \ \GG \, = \, \GG(\PP^1, \PP^{n+1}) \]
 of the Grassmannian of lines in $\PP^{n+1}$. Statement (ii) of the Proposition asserts that the congruence has $\textit{order one}$: if
$W_0 \subseteq    B_0 \times \PP^{n+1} $
 is the restriction to $B_0$ of the tautological point-line correspondence in $\GG \times \PP^{n+1}$, this means that  the projection \[ \mu_0: W_0 \lra \PP^{n+1}\] is birational, and it implies that $B_0$ is rational.\footnote{If one fixes a general hyperplane $H \subseteq \PP^{n+1}$, then almost every point of $H$ lies on a unique line of the congruence,  establishing a birational isomorphism $H\approx B_0$.}    
 
 Replacing $B_0$ by a desingularization $B \lra B_0$, we arrive at the basic diagram:
 \begin{equation} \label{basic.diagram}
 \begin{gathered}
 \xymatrix @C = 2.5pc@R=.55pc {
 X  &X^\pr \ar[l]_{\mu^\pr}\\
 \rotatebox{90}{$\supseteq$}    &\rotatebox{90}{$\supseteq$}\\
 \PP^{n+1} & W \ar[l]^\mu \ar[ddd]_\pi ^{\PP^1-\text{bundle}}\\ \\ \\
 &B \ar[r] &\GG
 }
 \end{gathered}
  \end{equation}
 Here $B$ is a smooth rational $n$-fold mapping birationally to its image in the Grassmannian $\GG$, and $\pi : W \lra B$ is the pull-back to $B$ of the tautological $\PP^1$-bundle on $\GG$. The  mapping $\mu : W \lra \PP^{n+1}$ is birational, and we define $ X^\pr \subseteq W$  to be the proper transform of $X$ in $W$. Thus $X^\pr$ is a reduced and irreducible divisor in $W$ of relative degree $\delta$ over $B$, and  $X^\pr \lra B$ is a generically finite morphism of degree $\delta$ that represents birationally the original mapping $f : X \dra \PP^n$.

Keeping this notation, we now give the:
 \begin{proof}[Proof of Theorem \ref{BCP.Conj.Thm.}] Let 
\[   X^* \ = \ \mu^{*}(X) \ \subseteq \ W \]
be the full pre-image of $X$ in $W$, so that $X^*$ is a (possibly non-reduced) divisor in $W$ of relative degree $d$ over $B$. We can write
$ X^*= X^\pr   +  F$, where $F$ is a divisor of relative degree $d - \delta \ge 1$ over $B$.
Now fix any irreducible component $Y \subseteq F$ that dominates $B$,  and view $Y$  as a reduced irreducible variety of dimension $n$. Thus $Y$ sits in a diagram
\begin{equation}\begin{gathered} \label{YtoXDiagram}
\xymatrix @C = 2.5pc@R=.55pc {
 X  &Y \ar[l]\\
 \rotatebox{90}{$\supseteq$}    &\rotatebox{90}{$\supseteq$}\\
 \PP^{n+1} & W \ar[l]^\mu \ar[r]_\pi &B,}
\end{gathered}
\end{equation}
  and we have
  \begin{equation} 
  \label{e.equation} 0 \ < \ e \, =_{\text{def}} \, \deg( Y \lra B ) \ \le \ d - \delta. \end{equation}

 Put 
 \begin{equation} \label{DefS}S \ =_\text{def}\  \mu(Y)\, \subseteq \, X, \end{equation} and let $s = \dim S$. Suppose first that  $s = 0$, ie that $S$ consists of a single point $p \in X$. This means that all the lines in the congruence pass through $p$, and hence $f$ must be projection from $p$. Therefore we may henceforth assume that 
$s  \ge 1$. 

Note next that $\covgon(S) \le e$. In fact, one can choose a rational subvariety $L \subseteq B$ of dimension $s$ with the property that an irreducible component  $Y^* \subseteq Y$ of the inverse image of $L$ in $Y$ is generically finite over $S$. Since $\deg(Y^* \lra L) \le e$, and since $L$ is rational, we see that $\covgon(Y^*) \le e$. Hence Lemma \ref{Covering.Lemma} applies to show that $\covgon(S) \le e$.  
 
 Now denote by  $K_{W/\PP} = K_{W/\PP^{n+1}}$ the relative canonical bundle of $\mu$ -- ie the ramification divisor of $\mu $ -- and consider a general fibre $\ell = \ell_y$ of $\pi$, ie a general line in our congruence. Recall that by a classical and elementary calculation, one has
 \begin{equation}   \label{Rel.Can.Eqn} (\ell \cdot K_{ W/ \PP}\big) \ = \ n \end{equation} 
(\cite[Lemma 1.1]{Arrondo}). 
 On the other hand, since $\dim \mu(Y) = s$ we claim that  \begin{equation}
 \label{Rec.Can.Ineq}  \ord_Y\big(K_{W/\PP}\big) \, \ge \, n-s. \end{equation}
 This follows from a standard computation, but we sketch a proof   in the Appendix (Corollary \ref{Lower.Bound.Rel.Canon}).  Therefore the contribution from $Y$ to the intersection product in \eqref{Rel.Can.Eqn} is 
$ \ge \, (n-s)e$, so in other words
\[
e(n-s) \ \le \ n . \tag{*}
\]

Now recall that we assume that  $s = \dim S \ge 1$. Then it follows from Proposition \ref{No.Curves.Small.Gonality} below  that
\[  e \ge d -2n +s. \]
Combining this with (*), one finds that
\[  d - 2n + s \ \le \ \frac{n}{n-s}. \] 
 But
 \[  \frac{n}{n-s} - s \ \le \ 1,  \]
 when   $1 \le s \le n-1$, which forces $d \le 2n + 1$. Therefore, if $d \ge 2n + 2$ and $\delta \le d-1$, then $s = 0$, ie $f$ is given by projection from a point.
 
Suppose next that $d = 2n + 1$. Here it remains to rule out the possibility that $s = n-1$ and $e = n = d -n - 1$. In this case, $Y$ is the unique irreducible component of the exceptional locus that dominates $B$, and 
 \begin{equation} \label{2n+1.Equation} \ord_Y(K_{W/\PP})  \ = \ 1. \end{equation} We claim that the mapping $Y \lra S$ is birational to  a $\PP^1$ bundle over $S$. In fact, as in \cite[p. 113]{KollarSmith}, $Y$ is birational to an exceptional divisor in a sequence of blowings-up of $
 \Bl_S(\PP^{n+1})$ along smooth centers dominating $S$, and then \eqref{2n+1.Equation} implies that $Y$ is birational to the dominating exceptional divisor of the blow-up of  $S$ itself. Now let $C \approx \PP^1$ be a generic fibre of $Y \lra S$, and write
\[   \pi^{-1} \big( \pi (C) \big) \ = \ C \cup C^\pr, \]
$C^\pr$ being a curve which is finite over its image $\pi(C)$. 
Thus each of the components of $C^\pr$ has gonality $\le e -1 = d-n-2$. We assert:
\begin{equation} \label{assertion}
\text{Every component of $C^\pr$ maps to a curve in $S$.}
\end{equation} Grant this for now.  One checks moreover that as one varies the fibres of $Y \lra S$, the resulting curves $C^\pr$ generically cover $S$. It follows that 
\[  \covgon(S) \le d - n - 2, \]
which contradicts Proposition \ref{No.Curves.Small.Gonality} and we are done.

It remains to prove \eqref{assertion}. Since $C$ is a fibre of $Y \lra S$, any component of $C^\pr$ that contracts would have to map to a different point of $S$. This means that all  the lines $\ell_z \subseteq \PP^{n+1}$ parametrized by $z \in \pi(C) \subseteq B$ share two points in common, and hence coincide. This in  turn implies that $\pi(C)$ maps to a point in the Grassmanian $\mathbf{G}$. But this is impossible since we may assume that $\pi(C)$ passes through a general point of $B$. 
    \end{proof}
    
    \begin{remark} \textbf{(Fundamental locus of a congruence of order one)} An argument similar to the one just completed  shows that every irreducible component of the fundamental locus of a congruence of order one other than a star of lines has covering gonality $\le n$. \qed\end{remark} 
  
We next show that the  computations of \cite{Ein} and \cite{Voisin} lead to the following statement.  This is essentially the same argument that appears in \cite{F}. 
  \begin{proposition} \label{No.Curves.Small.Gonality}
Let $X \subseteq \PP^{n+1}$ be a very general hypersurface of degree $d \ge 2n$. If $X$ contains an irreducible subvariety $S \subseteq X$ of dimension $s>0$ and  covering gonality  $\covgon(S) = c$,
then 
\[   c \ \ge \ d  - 2n + s. \]
\end{proposition}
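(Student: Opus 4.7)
The plan is to argue by contradiction, using the infinitesimal-variation-of-Hodge-structure technique of Ein \cite{Ein} and Voisin \cite{Voisin}. Assume that a very general $X \subseteq \PP^{n+1}$ of degree $d \ge 2n$ contains an irreducible curve $C$ with $k := \gon(C) \le d - 2n$. Let $\nu \colon \tilde C \to C$ be the normalization, $\phi \colon \tilde C \to X$ the composition with the inclusion, $H := \phi^* \OO_X(1)$, and fix a base-point-free pencil $L$ of degree $k$ on $\tilde C$, i.e., a $g^1_k$ computing the gonality.

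Step 1 (Spreading out). Since $X$ is very general and the relative Hilbert scheme of the universal family $\mathcal{X} \to B = \linser{\OO_{\PP^{n+1}}(d)}$ has only countably many components, at least one component must dominate $B$ with general fiber an irreducible curve of gonality $\le k$. After passing to such a component and base-changing further to rigidify the choice of $g^1_k$, one obtains a smooth parameter space $T$ with dominant map $T \to B$, a smooth family of curves $\pi \colon \tilde{\mathcal{C}} \to T$, a generically injective morphism $\Phi \colon \tilde{\mathcal{C}} \to \mathcal{X}$ over $T \to B$, and a relative line bundle $\mathcal{L}$ on $\tilde{\mathcal{C}}$ restricting fiberwise to a base-point-free $g^1_k$. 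Crucially, the pair $(\tilde C, L)$ now deforms nontrivially as $X$ varies in moduli.

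Step 2 (Ein--Voisin positivity). Applying the analysis of \cite{Ein} and \cite{Voisin} to this universal situation --- via the Koszul resolution of the ideal sheaf of $\mathcal{X}$ in $\PP^{n+1} \times B$ together with the IVHS for degree-$d$ hypersurfaces, expressed through the Jacobian ring after Griffiths --- one shows that for a general $t \in T$, with $X = X_t$, $\tilde C = \tilde{\mathcal{C}}_t$, and $L = \mathcal{L}_t$, the line bundle
\[
K_{\tilde C} \,-\, (d-n-2)\, H \,-\, L
\]
on $\tilde C$ has a nonzero global section. Geometrically, this says that for every fiber $\xi \subseteq \tilde C$ of the pencil $\linser{L}$, the length-$k$ image $\phi(\xi) \subseteq X$ must satisfy a Cayley--Bacharach type condition with respect to $\linser{\OO_{\PP^{n+1}}(d-n-2)}$.

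Step 3 (Contradiction). For $n \ge 2$ one has $k \le d - 2n \le d - n - 2$, while $K_X = \OO_X(d-n-2)$ is $(d-n-2)$-very ample; hence a general length-$k$ subscheme of $X$ imposes independent conditions on $\linser{K_X}$, incompatible with the Cayley--Bacharach condition of Step 2 (the case $n=1$ reduces to Noether's classical bound). The main obstacle is therefore Step 2, which is the heart of the Ein--Voisin method: one must weigh the positivity of $K_{\tilde C}$ inherited from $\phi^* K_X$ (measured by $\det N_{\tilde C/X}$ for smooth $C$, with an analogous correction in the singular case) against the constraints forced by the very generality of $X$, and extract the effectivity statement above. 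The explicit hypothesis $d \ge 2n$ appears exactly where these two contributions balance, and the bound $d - 2n + 1$ reflects the amount of ``extra'' positivity $K_{\tilde C}$ inherits from the variation of $X$ in its moduli space.
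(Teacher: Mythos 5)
Your overall strategy --- spread the curve out over the space of hypersurfaces of degree $d$ and invoke the Ein--Voisin positivity theorem --- is exactly the one the paper uses, and Step 1 is fine. The gap is in Step 2: what Ein and Voisin actually give in this situation is that, for general $t$, the sheaf $\Omega^{s+1}_{\mathcal{C}} \otimes \big((\pro_2\circ f)^*\OO_{\PP^{n+1}}(2n+1-d)\big)\big|_{C_t}$ is generically globally generated, where $s = \dim \HH{0}{\PP^{n+1}}{\OO_{\PP}(d)}$; this translates into
\[ K_{C_t} \ \lin \ (d-2n-1)\,H_{C_t} \, + \, (\text{effective}), \]
with twist $d-2n-1$, not $d-n-2$, and with no term involving the pencil: the Koszul/IVHS computation takes place on the universal family of curves inside the universal hypersurface and is completely insensitive to the chosen $g^1_k$. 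Your version --- effectivity of $K_{\tilde C} - (d-n-2)H - L$ --- is both unjustified and not what the method can produce. For instance, a bitangent plane section of a very general quintic surface in $\PP^3$ is an irreducible $2$-nodal plane quintic (irreducible since by Noether--Lefschetz $X$ carries no curves of degree $<5$), with $g(\tilde C)=4$, $\deg H = 5$, and gonality pencil $L$ of degree $3$, so $\deg\big(K_{\tilde C}-(d-n-2)H - L\big) = 6-5-3 = -2 <0$. Since the genuine Ein--Voisin bound holds for every curve on a very general hypersurface irrespective of its gonality, no variant of the computation can supply the extra $(n-1)H + L$ worth of positivity your statement requires. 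A further warning sign: were Steps 2 and 3 correct, they would yield $\gon(C)\ge d-n$ for every curve on $X$, far stronger than the proposition and than anything known.

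The Cayley--Bacharach step is also misplaced. The CB property with respect to $\linser{K_X}$ is available for the fibres of a rational covering $X\dra\PP^n$ via the trace map on canonical forms (Remark 2.2 of the paper), but there is no trace map attached to an arbitrary gonality pencil on an arbitrary curve lying in $X$. The correct way to bring the pencil into play is intrinsic to the curve: since $h^0(L)\ge 2$, Riemann--Roch shows the divisors of $L$ fail to impose independent conditions on $\linser{K_{C_t}}$, while the displayed decomposition says $K_{C_t}$ satisfies property $\BVA{d-2n-1}$ (because $C_t\to\PP^{n+1}$ is birational onto its image, so $(d-2n-1)H_{C_t}$ is $\BVA{d-2n-1}$ by Example 1.2(iv), and adding an effective divisor preserves this). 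Lemma 1.3 then gives $\gon(C_t)\ge (d-2n-1)+2 = d-2n+1$ directly, with no need for very ampleness of $K_X$ or for any hypothesis of the form $k\le d-n-2$.
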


 \begin{proof}[Proof of Proposition \ref{No.Curves.Small.Gonality}]
 Let $V =  \HH{0}{\PP^{n+1}}{\OO_\PP(d)}$ be the vector space of all hypersurfaces of degree $d$, which we view as an affine variety. Let  \[ \mathcal{X} \subseteq V \times \PP^{n+1}\] be the universal hypersurface of degree $d$. Denote by
 \[  \pro_1: \mathcal{X} \lra V \ \ , \ \ \pro_2: \mathcal{X} \lra \PP^{n+1} \] the two projections, and write $v = \dim V$.

Suppose now that a very general hypersurface of degree $d$ contains a subvariety of dimension $s$ having covering gonality $c$. Then by a standard argument there exists a commutative diagram:
\[
\xymatrix{\mathcal{S} \ar[r]^f \ar[d]_\pi & \mathcal{X} \ar[d]^{\pro_1}\\
T\ar[r]_\rho & V, }
\]
where $\pi : \mathcal{S} \lra T$ is a family of varieties of dimension $s$ having covering gonality  $c$,  $\rho$ is \'etale, and $f_t : S_t \lra X_{\rho(t)}$ is birational onto its image.    In this setting, Ein and Voisin prove that if $t \in T$ is a general point, then 
\[ \Omega^{v+s}_{\mathcal{S}} \otimes \Big((\pro_2 \circ f)^* \OO_{\PP^{n+1}}(2n +2 -d-s)\Big) \Big| S_t\]
is generically generated by its global sections (cf \cite[Theorem 1.4]{Voisin}), where $S_t = \pi^{-1}(t)$ is the fibre of $\pi$. This implies that
\[  K_{S_t} \ \lin \ (d+s -2n -2) H_{S_t} \, + \, ( \text{effective}), \]
where $H_{S_t}$ is the pull-back of the hyperplane bundle from $\PP^{n+1}$. Thus $K_{S_t}$ satisfies property $\BVA{d +s -  2n -2}$, and Theorem \ref{Cov.Gon.via.BVA.Thm}
 applies to show that $c \ge d + s - 2n$. 
 \end{proof}
 
 \begin{remark} \textbf{(Ran's Theorem).} As a referee of an earlier version of this paper pointed out, the proof of Theorem \ref{BCP.Conj.Thm.} is related to a result of Ran \cite{Ran}. Ran's theorem asserts that if $S \subseteq \PP^{n+1}$ is an irreducible projective variety of dimension $s \le n-1$, and if the union of the $k$-secant lines to the regular locus of $S$ fill $\PP^{n+1}$, then $k \le s+1$.  At least if one knew that it were smooth, the variety $Y$ in diagram \eqref{YtoXDiagram} would determine such a family with $k = e$, and hence $e \le s+1$. Therefore as above Proposition \ref{No.Curves.Small.Gonality} would yield
 \[    s + 1 \, \ge \, e \, \ge \, d-2n +s, \]
 which implies $d \le 2n +1$. So when $d \ge 2n + 2$ we could infer that $s = 0$, and the case $d = 2n+1$ would be treated as before. However the  argument given above not only avoids questions of singularities in applying Ran's theorem, it also leads to a quick new account of that result, as well as a related theorem of Behesti and Eisenbud \cite{BE}. This is presented in the Appendix.
 \end{remark}

\section{Open Problems}

In this section, we propose some  problems concerning this circle of ideas.

A first natural line of investigation is to compute or estimate the various measures of irrationality for other classes of varieties. As a start, suppose that
\[   X \ \subseteq \ \PP^{n+e} \]
is  a smooth complete intersection of hypersurfaces of degrees $2 \le d_1 \le d_2 \le \ldots \le d_e$. Then 
\[  K_X \ = \ \big(\sum d_i - n - e - 1\big)H, \]
and so it follows immediately from Theorem \ref{Cov.Gon.via.BVA.Thm} that 
\[ \covgon(X) \ \ge \ \sum d_i -n -e + 1. \]
But it seems almost certain that one can do much better:
\begin{problem} \label{Complete.Intersection.Problem}
Find bounds on the birational positivity of $K_X$ $($in the sense of Definition \ref{BVA}$)$ and the irrationality invariants of $X$ that are multiplicative in the degrees of the defining equations.
\end{problem}
\noi For example, when $X \subseteq \PP^{e+1}$ is a complete intersection curve, it was established in \cite[Exercise 4.12]{LLS} that \[ \gon(X) \ \ge \ (d_1 -1)\cdot d_2 \cdot \ldots \cdot d_e, \] but this already used some non-trivial vector bundle technology. In higher dimensions one might therefore want to consider first the case that $X$ is a \textit{very general} complete intersection of the stated multidegrees. In the codimension two case, for instance, Stapleton \cite{Stapleton} shows that if $X \subseteq \PP^{n+2}$ is a very general complete intersection of type $(2,d)$ such that $d \geq 2n$, then $$\irrdeg(X)=d.$$

It also seems  natural to consider polarized $K3$ and abelian surfaces of growing degree.
\begin{conjecture} \label{K3.Conj}
Let $(S_d, B_d)$ be a very general polarized $K3$ surface of genus $d$.\footnote{In other words, $B_d$ is an ample line bundle on $S_d$ with $\int c_1(B_d)^2 = 2d -2$.} Then
\[
\limsup _{d\to \infty} \, \irrdeg(S_d) \ = \ \infty,\]
with an analogous statement   for  a very general abelian surface $A_d$ that carries a polarization of type $(1,d)$. \end{conjecture} 
 \vskip -5pt
 \noi  One might have imagined that the irrationality degree grows linearly in $d$, but Stapleton \cite{Stapleton} observes  that
in fact \[ \irrdeg(S_d) \ \le \ \text{(Constant)}\cdot \sqrt{d}, \]
  so at best the growth is sublinear. Recalling that $\covgon(S_d) = \covgon(A_d) = 2$ for every $d$ (Example \ref{Cov.Gon.Exs}), the Conjecture would yield a natural family of examples showing that the covering gonality and the degree of irrationality capture very different phenomena. 

For higher dimensional abelian varieties, the covering gonality already seems very interesting.
\begin{problem}
Let $A^g$ be a very general principally polarized abelian variety of dimension $g$. Is it the case that
\[   \limsup_{g \to \infty}  \big( \covgon(A^g)\big)  \ = \ \infty?\]
\end{problem}
\vskip -5pt
\noi It follows from a theorem of Pirola \cite{Pirola} that in any event $\covgon(A) \ge 3$ when $\dim A \ge 3$. 

It finally seems very appealing to try to say something about these invariants for important moduli spaces that arise in algebraic geometry. For example, Donagi  suggests:
 \begin{problem} \label{Moduli} Find  bounds on the irrationality invariants of the moduli space $\mathcal{M}_g$ parameterizing  curves of genus $g$, or the moduli space $\mathcal{A}_g$ of principally polarized abelian varieties of dimension $g$.\footnote{Since these are birational invariants, they are  well-defined for open varieties.}
 \end{problem}
  \vskip -5pt
 \noi Presumably lower bounds for $\mathcal{M}_g$ would be very difficult to establish, and  may well be out of reach. On the other hand, upper bounds -- which already seem of some interest -- are likely to be much more accessible. For instance, an elementary argument with Hurwitz schemes shows that 
 \[    \covgon( \mathcal{M}_g ) \ \le \  h_{g,g+1},\]
 where $h_{g,g+1}$ denotes the Hurwitz number counting  degree $g+1$ simple coverings of $\PP^1$ with fixed branch points by a curve of genus  $g$. However this integer is huge, so a natural problem is to find substantially better upper bounds. Concretely, this amounts to asking for constructions that realize a general curve of genus $g$ as the fibre of a surface mapping to a curve $C$ of modest gonality. For $\mathcal{A}_g$, on the other hand, some lower bounds might be relatively easy to obtain. 
 
In another direction,    very little has been established so far about  connecting gonality. \begin{problem}
Develop a stock of examples of varieties $X$ for which $\conngon(X)$ can be estimated or computed.
 \end{problem} 
 \vskip -5pt
 \noi Many examples suggest that it is quite common for $\covgon(X) \ll \irrdeg(X)$, but it is not clear at the moment how easy it is for $\covgon(X)$ and $\conngon(X)$ to diverge (although it can certainly happen). Similarly, it would be interesting to have a better understanding of Voisin's invariant $v(X)$. 

It might also be interesting to ask about rational coverings of projective space having non-minimal degree.
\begin{problem} \label{Semigroup?}
Given an irreducible projective variety $X$ of dimension $n$, what can one say about the possible degrees of rational coverings $X \dra \PP^n$? 
\end{problem}
\vskip -5pt
\noi A simple argument suggests that there exists an integer $\delta_0 $ with the property that given any $\delta \ge \delta_0$ one can find a covering $X \dra \PP^n$ of degree $\delta$. Call the least such integer $\delta_0(X)$. How much can $\delta_0(X)$ deviate from $\irrdeg(X)$? What are estimates for $\delta_0(X)$ in the case of very general hypersurfaces $X_d \subseteq \PP^{n+1}$? One can also ask what ``gaps" can appear in the degrees of rational coverings (or the gonalities of covering families). In the case of surfaces, \cite[Theorem 1.3, Corollary 1.7]{LP} give some statements in this direction.

Another natural direction for research is to explore more fully the formal behavior of these invariants. For instance:
 \begin{problem}
 What are the variational properties of the various irrationality measures in families?
 \end{problem}
  \vskip -6pt
 \noi The example of a general surface $X \subseteq \PP^3$ deforming to one containing two lines shows that $\irrdeg(X)$ can decrease under specialization. Are there examples where it increases in a smooth family? This may be more accessible than the corresponding question for rationality itself, which is unknown.
 
 It is well established that questions of rationality become particularly interesting and subtle for varieties defined over fields that are not algebraically closed. This suggests:
  \begin{problem}
  Study measures of irrationality over non-closed fields. 
  \end{problem}
 \vskip -5pt
 \noi For instance if $X$ is defined over a field $k$, one would want to consider the least degree of a rational covering  $X \dra \PP^n$ defined over $k$. Already the case of hypersurfaces seems  potentially interesting. 
 
Finally, in a more speculative vein, a number of new techniques have been introduced to study questions of rationality, such as Koll\'ar's  passage to characteristic $p > 0$ \cite{Kollar},  the Chow-theoretic ideas used by Voisin \cite{Voisin2}, and the combination of these by Totaro \cite{Totaro}. There have also been ideas that rationality could be detected in the geometry of derived categories (eg \cite{Kuzn}). 
It would be very interesting if further ideas along these lines could be used to  say something about measures of irrationality, for example the Voisin invariant $v(X)$ mentioned briefly above.  Similarly, the papers \cite{Lengths} and \cite{HwangTo} show that the gonality of a curve $C$ influences various Riemannian and K\"ahler invariants of varieties associated to $C$. Are there any analogous statements in higher dimensions?

\appendix
\section{The Theorems of Ran and Beheshti--Eisenbud} 
 
The purpose of this appendix is to show how the ideas from the proof of Theorem \ref{BCP.Conj.Thm.} give a simple account  of both Ran's theorem \cite{Ran} and a related result of Beheshti and Eisenbud \cite{BE}.

For smooth varieties, Ran's result is the following:
\begin{theorem}[Ran \cite{Ran}]\label{ran.thm} Let $X \subseteq \PP^N$ be a smooth variety of dimension $n$. Let $\Sec^{n+2}X$ be the variety swept out by all of the $(n+2)$-secant lines of $X$. Then $$\dim(\Sec^{n+2}X) \leq n+1.$$
\end{theorem}
\noindent Ran works more generally with possibly singular varieties, which requires some preliminary definitions:
\begin{definition}
Let $X \subseteq \PP^N$ be a variety of dimension $n$. Let $X^0$ be the smooth locus of $X$. For $k\geq 0$ put:
\begin{gather*}\Sigma_k \, = \, \left\{ \text{lines } \ell \subseteq \PP^N \Big| \textrm{ length}(\ell \cap X) \geq k  \textrm{ and } \text{Supp}(\ell \cap X) \subseteq X^0\right\} , \\
 \Sec^k X^0 \, = \, \bigcup_{\ell \in \Sigma_k} \ell .
 \end{gather*}
\end{definition}

\noindent Ran's stronger statement is:
\begin{theorem}[\cite{Ran}] \label{gen.ran.thm}
Let $X \subseteq \PP^N$ be a $($possibly singular$)$ subvariety of dimension $n$. Then $$\dim(\Sec^{n+2} X ^0) \leq n+1.$$
\end{theorem}

\begin{proof}
Via generic projections, one  reduces to the case when $ X \subseteq \PP^{N}$ with  $N = n+2$.
Assume for a contradiction that $\overline{\Sec^{n+2} X^0 } = \PP^{n+2}$. In this case, we can find a subvariety $$B_0 \subseteq \mathbf{G}(\PP^1, \PP^{N}) = \mathbf{G}$$ of dimension $n+1 = N-1$ parametrizing $(n+2)$-secant lines for $X$ that generically meet $X$ only at smooth points. As in \S \ref{Deg.Irrat.Section}, one then arrives at a diagram:
\begin{equation} \label{Appendix.Diagram}
\begin{gathered}
 \xymatrix @C = 2.5pc@R=.55pc {
 X \subseteq  \PP^{N} & W \ar[l]^(.35)\mu \ar[ddd]_\pi ^{\PP^1-\text{bundle}}\\ \\ \\
 &B \ar[r] &\GG,
 }
 \end{gathered}
\end{equation}
where $B$ is a smooth projective variety of dimension $N - 1= n+1$ mapping birationally to $B_0$, $\pi : W \lra B$ is the pull-back  to $B$ of the tautological $\PP^1$ bundle on $\GG$, and $\mu$ is surjective and generically finite.  
Denoting by $\II_X \subseteq \OO_{\PP^{N}}$ the ideal sheaf of $X$, write
\[\II_X \cdot \OO_W \ = \ \II_{X^\pr} \cdot \OO_W(-D), \]
where $X' \subseteq W$ is a closed subscheme of codimension at least 2, and $D$ is an effective  divisor on $W$.
We may  decompose $D$ as a sum $D = E +  E',$ where $E$ and $E'$ are effective divisors such that every component of $E$ dominates $B$ and $\codim(\pi(E')) \geq 1$.
Now set $$E = \sum a_i E_i,$$ where the $E_i$ are the distinct irreducible components of $E$. Let $\ell$ be a generic fibre of $\pi$. Our assumption implies that $$(E  \cdot \ell) \ \geq \ N \, = \, n+2.$$

Let $K_{W/\PP^N} = K_{W/\PP}$ be the ramification divisor of $\mu$. By the adjunction formula,  $$\big( K_{W/\PP} \cdot \ell\big)  \, = \, N-1 \, = \, n + 1$$
(\cite[Lemma 1.1]{Arrondo}). Note that that $\mu(E_i) \cap X^0 \neq \emptyset$ for each $i$. In fact, each $E_i$ meets a general fibre $\ell$ of $\pi$, while by assumption $\ell$ meets $X$ only at smooth points. It then follows from Lemma \ref{Rel.Canon.Lemma} below that $$\ord_{E_i} (K_{W/\PP}) \, \geq \, 2a_i - 1 \, \geq \,  a_i  \, = \, \ord_{E_i}(E).$$
This implies that $$N-1 \, = \, \big( K_{W/\PP} \cdot \ell\big) \, \geq \,\big( E \cdot \ell \big) \, \geq \, N,$$  a contradiction.
\end{proof}


In \cite{BE}, Beheshti and Eisenbud give an improvement of Ran's theorem. A similar argument also yields  a simplified proof of their result. 

\begin{theorem}[\cite{BE}, Theorem 1.5]
Let $X \subseteq \PP^N$ be a smooth irreducible variety of dimension $n$. If $k \geq 2,$ then $$\dim(\Sec^kX) \leq \frac{nk}{k-1} + 1.$$
\end{theorem}
\noi More generally, for possibly singular $X$ the same inequality holds for 
$\Sec^k X^0$.
\begin{proof}[Sketch of Proof]
As observed by Gruson and Peskine \cite[p. 554]{GruPesh}, it is equivalent to show that 
$$\dim(\Sec^k X^0) \leq n+s,$$ 
for any positive integer $s$   such that $1 \leq s \leq n+1,$  and $k > (n/s) + 1$. The argument then closely parallels the previous proof. In brief, by a generic projection  we can suppose for a contradiction that  $N = n + s + 1$, and that $\overline{ \Sec^kX^0} = \PP^N$. We then arrive at an analogue of diagram \eqref{Appendix.Diagram} with $\dim B = n + s = N -1.$  Defining $\ell$ and  $E = \sum a_i E_i$ as in the proof of Theorem \ref{gen.ran.thm}, one has
\[   E \cdot \ell \ \ge \ k \ \ , \ \ K_{W/\PP} \cdot \ell \, = \, N -1 \, = \, n +s.\]
On the other hand,
\[   \ord_{E_i}(K_{W/\PP}) \geq (s+1)a_i - 1 \geq s \cdot a_i  \]
thanks to Lemma \ref{Rel.Canon.Lemma}. But then
\[ n + s \, = \, N-1 \, = \, K_{W/\PP} \cdot \ell \, = \, \sum \Big(\ord_{E_i}(K_{W/\PP}) E_i \Big)\cdot \ell \, \geq \,s \cdot ( E  \cdot \ell) \, \geq \, s\cdot k.\]
But this implies that $k \le (n/s) + 1$, a contradiction.
\end{proof}

Finally, we spell out for the convenience of the reader the inequalities we have drawn on concerning the relative canonical divisor of a generically finite and surjective morphism between smooth varieties. 
\begin{lemma} \label{Rel.Canon.Lemma}
Consider a generically finite surjective morphism \[ \mu : W \lra P\] between smooth projective varieties of dimension $m$. Let $X\subseteq P$ be an  irreducible subvariety of codimension $c$ and regular locus $X^0 = X_{\text{reg}}$, and let $F \subseteq W$ be a prime divisor on $W$.  Assume that 
\[    \mu(F) \ \subseteq \ X \ \  \text{ and } \ \ \mu(F)\,  \cap \, X^0 \, \ne \, \varnothing,\]
and denote by $a = \ord_F( \II_X \cdot \OO_W)$  the order of vanishing along $F$ of the pull-back to $W$ of the ideal sheaf of $X$.
Then 
\[   \ord_F \big( K_{W/P} \big) \ \ge \ c \cdot a - 1. \]
\end{lemma}

In particular, taking $X = \mu(F)$ this yields:
\begin{corollary} \label{Lower.Bound.Rel.Canon}
Given $\mu : W \lra P$ as above, let $F \subseteq W$ be a prime divisor on $W$. Suppose that $\dim \mu(F) = s$. Then 
\[  \ord_F \big(K_{W/P}\big) \ \ge \ m - 1 -s.  \ \ \qed \]
\end{corollary}

\begin{proof}[Proof of Lemma \ref{Rel.Canon.Lemma}] We argue as in \cite[Lemma 9.2.19]{PAG}. Choose a general point $y \in F$  and let $x = \mu(y) \in X$, which we suppose to be a smooth point of $X$. We can then choose local analytic coordinates $z_1, \ldots, z_m$ on $W$ centered at $y$ and $u_1, \ldots, u_m$ on $P$ centered at $x$ such that locally
\[     F \, = \, \{ z_1 = 0 \} \ \ , \ \ X \, = \,\{ u_1 = \ldots = u_c  = 0 \}. \]
There exist functions $b_i \in \CC\{z\} $ such that
\[  \mu^* (u_i) \, = \, z_1^{a_i} \cdot b_i, 
\]
where $a_i \ge 0$ for each $i$ and $a_1, \ldots , a_c \ge a$.
Then $\mu^* (du_i) = a_i z_1^{a_i -1} b_i  dz_1 + z_1^{a_i} db_i $, and hence
\[   \mu^* \big( du_1 \wedge \ldots \wedge du_m \big ) \ = \ z_1^{\, (\sum a_i) - 1} \cdot g \cdot dz_1 \wedge \ldots \wedge dz_m \]
for some $g \in  \CC\{z\}$. The assertion follows. 
\end{proof}

 %
 %
 %
 %

 \end{document}